\newtheorem{thm}{Theorem}[section]
\newtheorem{cor}[thm]{Corollary}
\newtheorem{lem}[thm]{Lemma}
\newtheorem{prop}[thm]{Proposition}
\theoremstyle{definition}
\theoremstyle{remark}
\newtheorem{rem}[thm]{Remark}
\theoremstyle{example}
\theoremstyle{conjecture}
\numberwithin{equation}{section}
\def\dlim{\displaystyle\lim}
\def\dsum{\displaystyle\sum}
\newcommand{\C}{{\mathbb C}}
\newcommand{\N}{{\mathbb N}}
\newcommand{\re}{{\rm Re}\,}
\newcommand{\calC}{{\mathcal C}}
\newcommand{\calF}{{\mathcal F}}
\newcommand{\calI}{{\mathcal I}}
\newcommand{\calJ}{{\mathcal J}}
\newcommand{\calO}{{\mathcal O}}
\def\re{\mathop{\fam0 Re}}
\begin{document}

\title{Weighted composition operators between different Fock spaces}%

\author{Pham Trong Tien \& Le Hai Khoi$^\dag$}%
\address{(Tien) Department of Mathematics, Mechanics and Information Technology, Hanoi University of Science, VNU, 334 Nguyen Trai, Hanoi, Vietnam}%
\email{phamtien@mail.ru, phamtien@vnu.edu.vn}
\address{(Khoi) Division of Mathematical Sciences, School of Physical and Mathematical Sciences, Nanyang Technological University (NTU),
637371 Singapore}%
\email{lhkhoi@ntu.edu.sg}

\thanks{$^\dag$ Supported in part by MOE's AcRF Tier 1 grants M4011166.110 (RG24/13) and M4011724.110 (RG128/16)}

\subjclass[2010]{30D15, 47B33}%

\keywords{Fock space, weighted composition operator, essential norm, compact difference, topological structure}%

\date{\today}%

\begin{abstract}
We study weighted composition operators acting between Fock spaces. The following results are obtained:
\begin{itemize}
\item[(i)] Criteria for the boundedness and compactness.
\item[(ii)] Characterizations of compact differences and essential norm.
\item[(iii)] Complete descriptions of path connected components and isolated points of the space of composition operators and the space of nonzero weighted composition operators.
\end{itemize}
\end{abstract}


\maketitle

\section{Introduction}

Let $X$ be a space of holomorphic functions on a domain $G$ in $\C$. For a holomorphic self-map $\varphi$ of $G$ and a holomorphic function $\psi$ on $G$, the \textit{weighted composition operator} $W_{\psi, \varphi}$ is defined by $W_{\psi, \varphi}(f): = \psi (f \circ \varphi)$ for $f \in X$. When the function $\psi$ is identically $1$, the operator $W_{\psi, \varphi}$ reduces to the \textit{composition operator} $C_{\varphi}$. 
A main problem in the investigation of such operators is to relate function theoretic properties of $\psi$ and $\varphi$ to operator theoretic properties of $C_{\varphi}$ and $W_{\psi, \varphi}$.

The study of composition operators on various Banach spaces of holomorphic functions on the unit disc or the unit ball, such as Hardy and Bergman spaces, the space $H^{\infty}$ of all bounded holomorphic functions, the disc algebra and weighted Banach spaces with sup-norm, etc. received a special attention of many authors during the past several decades (see \cite{CM, Sha} and references therein for more information).
Weighted composition operators on these spaces appeared in some works (see, for instance, \cite{C-78, C-80, CG-06,F-64}) with different applications. There is a great number of topics on operators of such a type: boundedness and compactness \cite{CH-03, CZ-04}, compact differences \cite{M-08}, topological structure \cite{BLW-09, HIO-05, IIO-12, IO-14}, dynamical and ergodic properties \cite{BCEJ-16, B-14, YR-07}. On many spaces, these topics are difficult and not yet solved completely.

Recently, much progress was made in the study of composition operators and weighted composition operators on Fock spaces. One of the main differences between operators $C_{\varphi}$ and $W_{\psi, \varphi}$ on Fock spaces and those on the above-mentioned spaces of holomorphic functions on the unit disc or the unit ball is the lack of bounded holomorphic functions in the Fock space setting. In fact, entire functions $\varphi$ that induce bounded composition operators $C_{\varphi}$ and  weighted composition operators $W_{\psi, \varphi}$ are quite restrictive, in details, they are only affine functions.
We refer the reader to \cite{CMS-03, D-14} for composition operators on the Hilbert Fock space $\calF^2(\C^n)$, to \cite{T-14, HK-16-1, U-07} for weighted composition operators on the Hilbert Fock space $\calF^2(\C)$. It should be noted that in these papers the techniques of adjoint operators in Hilbert spaces played an essential role.

The question to ask is: how about weighted composition operators acting between general Fock spaces $\calF^p(\C)$ and $\calF^q(\C)$ ($0 < p, q < \infty$). In this paper, we study several important questions for the operator $W_{\psi, \varphi}$: boundedness, compactness, essential norm, compact differences and topological structure. Roughly speaking, our main result is to give complete answers to all these questions by developing an essentially different approach without adjoint operators.

The paper is  organized as follows. Section 2 contains some preliminary results about the Fock spaces and operators defined on them. Section 3 deals with topological properties of weighted composition operators. In details, criteria for the boundedness and compactness of such operators are obtained. Note that in the case when $W_{\psi, \varphi}$ acts from a larger Fock space into a smaller one, these properties are equivalent. In view of this, we provide lower and upper estimates for essential norm of only weighted composition operators acting from $\calF^p(\C)$ into $\calF^q(\C)$ with $p \leq q$. In Section 4 we study the topological structure of the space of all composition operators and the space of all nonzero weighted composition operators between different Fock spaces endowed with the operator norm topology. We give complete characterizations of connected path components and isolated points in both these spaces. Necessary and sufficient conditions for the compactness of the difference of two weighted composition operators are also stated.

\section{Preliminaries}

For a number $p \in (0,\infty)$, the Fock space $\calF^p(\C)$ is defined as follows
$$
\calF^p(\C):=\left\{f\in\calO(\C): \|f\|_p = \left(\frac{p}{2\pi}\int_{\C}|f(z)|^p e^{-\frac{p|z|^2}{2}}\;dA(z)\right)^{1/p} < \infty\right\},
$$
where $\calO(\C)$ is the space of entire functions on $\C$ with the usual compact open topology and $dA$ is the Lebesgue measure on $\C$.
Furthermore, the space $\calF^{\infty}(\C)$ consists of all entire functions $f\in\calO(\C)$ for which
$$
\left\|f\right\|_{\infty}:=\sup_{z\in\C}|f(z)| e^{-\frac{\left|z\right|^2}{2}}< \infty.
$$

It is well known that $\calF^p(\C)$ with $p \geq 1$ and $\calF^{\infty}(\C)$ are Banach spaces. When $0 < p < 1$, $\calF^p(\C)$ is a complete metric space with the distance $d(f,g): = \|f-g\|^p_p$.

For each $w \in\C$, we define the function
$$
k_w(z)=e^{\overline{w}z-\frac{|w|^2}{2}},\ z\in\C.
$$
These functions play important roles in the study of Fock spaces $\mathcal F^p(\C)$. Obviously, $\|k_w\|_p = 1$ for every $w \in \C$ and $k_w$ converges to $0$ in $\mathcal O(\C)$ as $|w|\to\infty$.

We refer the reader to the monograph \cite{KZ} for more details about Fock spaces. Hereby, we give only some auxiliary results which will be needed in the sequel.

\begin{lem} \label{lem-Fp}
Let $p \in (0, \infty)$ be given. For each function $f \in \calF^p(\C)$, the following assertions are valid:
\begin{itemize}
\item[(i)]
$$
|f(z)| \leq e^{\frac{|z|^2}{2}}\|f\|_p,\ \forall z \in \C.
$$
\item[(ii)]
$$
|f'(z)| \leq e^2(1+|z|)e^{\frac{|z|^2}{2}} \|f\|_p,\ \forall z\in\C.
$$
\end{itemize}
\end{lem}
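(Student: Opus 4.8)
The plan is to derive both estimates from the classical reproducing-kernel-type pointwise bounds for Fock spaces, which in turn come from subharmonicity of $|f|^p$ together with a mean-value estimate over a disc of fixed radius. For part (i), I would fix $z \in \C$ and apply the sub-mean-value property of the subharmonic function $|f|^p$ on the disc $D(z,1)$: this gives
\[
|f(z)|^p \le \frac{1}{\pi}\int_{D(z,1)} |f(\zeta)|^p \, dA(\zeta).
\]
On $D(z,1)$ one has $|\zeta|^2 \le (|z|+1)^2 = |z|^2 + 2|z| + 1$, so $e^{-\frac{p|\zeta|^2}{2}} \ge e^{-\frac{p|z|^2}{2}} e^{-p(|z| + \frac12)}$ is the wrong direction; instead I multiply and divide by the Gaussian weight and bound $e^{\frac{p|z|^2}{2}}$ against $e^{\frac{p|\zeta|^2}{2}}$ on the disc, picking up a factor like $e^{p(|z|+1/2)}$. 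This yields $|f(z)| \le C(1+|z|)^{?}\cdots$ — not yet the clean bound claimed. The sharp, constant-free inequality $|f(z)| \le e^{|z|^2/2}\|f\|_p$ actually follows most cleanly from testing against the normalized kernel: since $\|k_z\|_p = 1$ and (for $p \ge 1$) $f(z) = \inner{f, k_z}_{\calF^2}$-type duality gives $|f(z)| e^{-|z|^2/2} = |\inner{f,k_z}| \le \|f\|_p$; for general $0 < p < \infty$ one instead uses that $g := f \cdot k_z^{-1}$ suitably rescaled, or more robustly quotes the standard estimate from \cite{KZ}. I would present (i) as the known pointwise growth estimate for $\calF^p(\C)$, citing \cite{KZ}, and spend the real work on (ii).

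For part (ii), the idea is to bound $f'(z)$ via the Cauchy integral formula over a small circle and then apply (i) on that circle. Fix $z$ and write
\[
f'(z) = \frac{1}{2\pi i}\int_{|\zeta - z| = r} \frac{f(\zeta)}{(\zeta - z)^2}\, d\zeta,
\]
so $|f'(z)| \le \frac{1}{r}\sup_{|\zeta - z| = r} |f(\zeta)|$. By part (i), $|f(\zeta)| \le e^{|\zeta|^2/2}\|f\|_p \le e^{(|z|+r)^2/2}\|f\|_p$ for $|\zeta - z| = r$. Hence
\[
|f'(z)| \le \frac{1}{r}\, e^{\frac{(|z|+r)^2}{2}}\,\|f\|_p
= \frac{1}{r}\, e^{r|z| + \frac{r^2}{2}}\, e^{\frac{|z|^2}{2}}\,\|f\|_p.
\]
Now I optimize the choice of $r$. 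Taking $r = 1/(1+|z|)$ gives $r|z| = |z|/(1+|z|) < 1$ and $r^2/2 \le 1/2$, so the prefactor is at most $(1+|z|)\, e^{1 + 1/2} = (1+|z|) e^{3/2} \le e^2 (1+|z|)$, which is exactly the claimed constant $e^2$. So the key step is the Cauchy-estimate-plus-part-(i) combination followed by the radius choice $r = 1/(1+|z|)$.

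The main obstacle — really the only delicate point — is getting the constant in (ii) to come out as $e^2$ rather than something larger: this forces the specific radius choice, and one must check that $r|z| + r^2/2 + \ln(1+|z|) \le 2 + \ln(1+|z|)$, i.e. $r|z| + r^2/2 \le 2$, which holds comfortably for $r = 1/(1+|z|)$ since then $r|z| < 1$ and $r^2/2 < 1/2$. A secondary point is that part (i) must be invoked with the supremum over the full circle $|\zeta - z| = r$, so the worst case $|\zeta| = |z| + r$ is what enters; this is automatic. If one wanted to avoid citing \cite{KZ} for (i), the sub-mean-value-over-$D(z,1)$ argument combined with Hölder (when $p < 1$, using that $|f|^p$ is subharmonic) gives a bound of the form $|f(z)| \le c\, e^{|z|^2/2}\|f\|_p$ with an absolute constant $c$, and then (ii) goes through with $e^2$ replaced by $c\,e^2$; but since the paper already references \cite{KZ} for Fock space basics, I would simply cite the clean inequality there and keep the exposition short.
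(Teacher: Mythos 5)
Your proposal is correct and follows essentially the same route as the paper: part (i) is quoted from \cite{KZ}, and part (ii) is obtained from the Cauchy integral estimate over a circle of radius comparable to $\min(1,|z|^{-1})$ combined with part (i). The only difference is that you use the single radius $r=1/(1+|z|)$ for all $z$, whereas the paper splits into the cases $|z|\le 1$ (radius $1$) and $|z|>1$ (radius $|z|^{-1}$); this is a cosmetic variation and both yield the constant $e^2$.
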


\begin{proof} (i) was proved in \cite[Corollary 2.8]{KZ}. 

(ii). Let $f\in\calF^p(\C)$. For $|z|\le1$, by the classical Cauchy formula and the part (i),
\begin{align*}
|f'(z)| & \leq \dfrac{1}{2 \pi} \int_{|\zeta - z| = 1} \dfrac{|f(\zeta)|}{|\zeta - z|^2} |d\zeta| \leq \max_{|\zeta - z| = 1} |f(\zeta)|\\
& \leq \|f\|_p \max_{|\zeta - z| = 1} e^{\frac{|\zeta|^2}{2}} \leq e^2 \|f\|_p.
\end{align*}

On the other hand, for $|z|>1$, arguing as above, we get
\begin{align*}
|f'(z)| & \leq \dfrac{1}{2 \pi} \int_{|\zeta - z| = |z|^{-1}} \dfrac{|f(\zeta)|}{|\zeta - z|^2} |d\zeta| \leq |z| \max_{|\zeta - z| = |z|^{-1}} |f(\zeta)| \\
& \le |z| e^{\frac{(|z| + |z|^{-1})^2}{2}} \|f\|_p  \le e^2 |z| e^{\frac{|z|^2}{2}} \|f\|_p.
\end{align*}

Combining these estimates yields the desired inequality.
\end{proof}

The following result was proved in \cite[Theorem 2.10]{KZ}.
\begin{lem}\label{lem-pq}
For $0< p < q < \infty$, $\mathcal F^p(\C) \subset \mathcal F^q(\C)$, and the inclusion is proper and continuous. Moreover,
$$
\|f\|_q \leq \left(\dfrac{q}{p}\right)^{\frac{1}{q}} \|f\|_p,\ \forall f \in \mathcal F^p(\C).
$$
\end{lem}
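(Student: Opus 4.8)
The plan is to extract the inclusion and the norm estimate directly from the pointwise bound of Lemma \ref{lem-Fp}(i), and to settle properness with an explicit test sequence. For the quantitative inclusion, the key is the factorization $|f(z)|^q e^{-q|z|^2/2} = \left(|f(z)|^p e^{-p|z|^2/2}\right)\left(|f(z)|\, e^{-|z|^2/2}\right)^{q-p}$ for $z \in \C$. Since $q-p>0$, Lemma \ref{lem-Fp}(i) bounds the last factor by $\|f\|_p^{q-p}$ uniformly in $z$, so integrating yields
$$
\|f\|_q^q = \frac{q}{2\pi}\int_{\C}|f(z)|^q e^{-\frac{q|z|^2}{2}}\,dA(z) \le \|f\|_p^{q-p}\cdot\frac{q}{2\pi}\int_{\C}|f(z)|^p e^{-\frac{p|z|^2}{2}}\,dA(z) = \frac{q}{p}\,\|f\|_p^{q},
$$
the last step being simply the rescaling of the integral defining $\|f\|_p^p$. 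Taking $q$-th roots gives $\|f\|_q \le (q/p)^{1/q}\|f\|_p$; in particular $f \in \calF^p(\C)$ forces $f \in \calF^q(\C)$, and this inequality is exactly the continuity of the inclusion.

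For properness I would test the inclusion on the monomials $z^n$. Writing the Fock integral in polar coordinates and substituting $t = p|z|^2/2$ expresses the norms in terms of the Gamma function, $\|z^n\|_p = (2/p)^{n/2}\,\Gamma\!\big(\tfrac{np}{2}+1\big)^{1/p}$, and likewise with $q$ in place of $p$. In the quotient
$$
\frac{\|z^n\|_p}{\|z^n\|_q} = \left(\frac{q}{p}\right)^{n/2}\frac{\Gamma\!\big(\tfrac{np}{2}+1\big)^{1/p}}{\Gamma\!\big(\tfrac{nq}{2}+1\big)^{1/q}},
$$
Stirling's formula shows that the exponential factor $(p/q)^{n/2}$ produced by the Gamma ratio cancels $(q/p)^{n/2}$, leaving a quantity of order $n^{\frac{1}{2p}-\frac{1}{2q}}$, which is unbounded since $p<q$. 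Hence $\|\cdot\|_p$ and $\|\cdot\|_q$ are not comparable norms on the polynomials; since $\calF^p(\C)$ and $\calF^q(\C)$ are complete metric spaces and the inclusion is continuous, the open mapping theorem excludes $\calF^p(\C)=\calF^q(\C)$, so the inclusion is proper. Alternatively, one can exhibit an explicit witness as a lacunary sum $\sum_{n} a_n k_{w_n}$ with $|w_n|$ increasing fast, $\sum_{n}|a_n|^q<\infty$ and $\sum_{n}|a_n|^p=\infty$, using that widely separated normalized kernels essentially decouple in the Fock norm.

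The norm inequality is routine once the factorization is noticed. The only delicate point is properness: in the monomial route it is the Stirling asymptotics of the Gamma-function quotient, and in the series route it is quantifying the decoupling of separated reproducing kernels. I expect the properness half to be the main --- though not severe --- obstacle.
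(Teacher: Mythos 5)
Your proof is correct. The paper does not prove this lemma at all --- it simply cites \cite[Theorem 2.10]{KZ} --- and your factorization $|f|^q e^{-q|z|^2/2}=\bigl(|f|^p e^{-p|z|^2/2}\bigr)\bigl(|f|e^{-|z|^2/2}\bigr)^{q-p}$ combined with Lemma \ref{lem-Fp}(i) is exactly the standard argument in that reference, so there is nothing to compare. The properness step also checks out: the monomial computation and Stirling give $\|z^n\|_p/\|z^n\|_q\asymp n^{\frac{1}{2p}-\frac{1}{2q}}\to\infty$, and the open mapping theorem applies even for $0<p<1$ since $\calF^p(\C)$ is then still an $F$-space, so the norms would have to be comparable if the spaces coincided.
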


The following two lemmas give necessary and sufficient conditions for compactness of an operator acting from one Fock space into another.

\begin{lem}\label{lem-com}
Let $ p, q  \in (0, \infty)$ and $T$ be a linear continuous operator from $\mathcal O(\C)$ into itself and $T: \mathcal F^p(\C) \to \mathcal F^q(\C)$ be well-defined. The following two assertions are equivalent:
\begin{itemize}
\item[(i)] $T: \mathcal F^p(\C) \to \mathcal F^q(\C)$ is compact.
\item[(ii)] For every bounded sequence $(f_n)_n$ in $\mathcal F^p(\C)$ converging to $0$ in $\mathcal O(\C)$, the sequence $(Tf_n)_n$ also converges to $0$ in $\mathcal F^q(\C)$.
\end{itemize}
\end{lem}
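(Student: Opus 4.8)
The plan is to prove the equivalence of the two assertions by a standard normal-families argument, using Lemma~\ref{lem-Fp}(i) to pass between norm convergence in $\mathcal F^q(\C)$ and uniform-on-compacta convergence in $\mathcal O(\C)$.

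\textbf{(i) $\Rightarrow$ (ii).} Suppose $T\colon\mathcal F^p(\C)\to\mathcal F^q(\C)$ is compact, and let $(f_n)_n$ be bounded in $\mathcal F^p(\C)$ with $f_n\to0$ in $\mathcal O(\C)$. It suffices to show that every subsequence of $(Tf_n)_n$ has a further subsequence converging to $0$ in $\mathcal F^q(\C)$. Passing to a subsequence, compactness of $T$ gives a further subsequence $(Tf_{n_k})_k$ converging in the norm of $\mathcal F^q(\C)$ to some $g\in\mathcal F^q(\C)$. By Lemma~\ref{lem-Fp}(i), norm convergence in $\mathcal F^q(\C)$ forces $Tf_{n_k}\to g$ uniformly on compact subsets of $\C$, hence in $\mathcal O(\C)$. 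On the other hand, $T$ is continuous on $\mathcal O(\C)$ and $f_{n_k}\to0$ in $\mathcal O(\C)$, so $Tf_{n_k}\to0$ in $\mathcal O(\C)$. Since the topology of $\mathcal O(\C)$ is Hausdorff, $g=0$, which proves $Tf_{n_k}\to0$ in $\mathcal F^q(\C)$, as required.

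\textbf{(ii) $\Rightarrow$ (i).} Assume (ii) and let $(h_n)_n$ be a sequence in the closed unit ball of $\mathcal F^p(\C)$; we must extract a subsequence whose image under $T$ converges in $\mathcal F^q(\C)$. By Lemma~\ref{lem-Fp}(i), $|h_n(z)|\le e^{|z|^2/2}$ for all $z$, so the family $(h_n)_n$ is locally uniformly bounded; by Montel's theorem there is a subsequence $(h_{n_k})_k$ converging in $\mathcal O(\C)$ to some $h\in\mathcal O(\C)$. A Fatou-type estimate shows $h\in\mathcal F^p(\C)$ with $\|h\|_p\le1$ (the norm in $\mathcal F^p(\C)$ is lower semicontinuous with respect to convergence in $\mathcal O(\C)$, by Fatou's lemma applied to the defining integral, and trivially so for $p=\infty$). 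Then $f_k:=h_{n_k}-h$ is bounded in $\mathcal F^p(\C)$ and converges to $0$ in $\mathcal O(\C)$, so (ii) yields $Tf_k\to0$ in $\mathcal F^q(\C)$; that is, $Th_{n_k}\to Th$ in $\mathcal F^q(\C)$. Hence $T$ maps the unit ball into a relatively compact set, i.e. $T$ is compact.

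The argument is essentially soft, so there is no serious obstacle; the only point requiring a little care is the claim that membership in $\mathcal F^p(\C)$ (with the unit-ball bound) is preserved under limits in $\mathcal O(\C)$, which is handled by Fatou's lemma for $0<p<\infty$ and by taking pointwise suprema for $p=\infty$. One should also remember to justify the reduction in (i) $\Rightarrow$ (ii) to subsequences, using the standard fact that a sequence converges to a limit iff every subsequence has a further subsequence converging to that limit.
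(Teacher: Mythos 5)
Your proposal is correct and follows essentially the same route as the paper: Montel's theorem plus Fatou's lemma for (ii) $\Rightarrow$ (i), and for (i) $\Rightarrow$ (ii) the identification of the norm limit of $Tf_{n_k}$ with its $\mathcal O(\C)$-limit $0$ via Lemma \ref{lem-Fp}(i) and the continuity of $T$ on $\mathcal O(\C)$. The only cosmetic difference is that you phrase (i) $\Rightarrow$ (ii) as a direct subsequence-of-subsequence argument while the paper argues by contradiction; these are equivalent.
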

\begin{proof}
(i) $\Rightarrow$ (ii). Suppose that $T: \mathcal F^p(\C) \to \mathcal F^q(\C)$ is compact and there is a bounded sequence  $(f_n)_n$ in $\mathcal F^p(\C)$ converging to $0$ in $\mathcal O(\C)$ such that $(Tf_n)_n$ does not converge to $0$ in $\mathcal F^q(\C)$.

Without loss of generality, we assume that there is a number $c>0$ such that
\begin{equation}\label{c-eq}
\|Tf_n\|_q \geq c, \ \forall n \in \N.
\end{equation}

Since $T: \mathcal F^p(\C) \to \mathcal F^q(\C)$ is compact, there is a subsequence $(f_{n_k})_k$ of $(f_n)_n$ such that $Tf_{n_k}$ converges to some function $g$ in $\mathcal F^q(\C)$.

On the other hand, since $T$ is continuous on $\mathcal O(\C)$, then $Tf_n$, and hence $Tf_{n_k}$ converge to $0$ in $\mathcal O(\C)$.

Consequently, the function $g$ must be identically zero which is a contradiction with \eqref{c-eq}. 

(ii) $\Rightarrow$ (i).
Let $B$ be an arbitrary bounded subset of $\mathcal F^p(\C)$ and $(f_n)_n$ be a sequence in $B$. By Lemma \ref{lem-Fp}[(i)] and Montel's theorem, $B$ is relatively compact in $\mathcal O(\C)$, and then there exists a subsequence $(f_{n_k})_k$ of $(f_n)_n$ converging to some function $f$ in $\mathcal O(\C)$. From this and Fatou's lemma, we have that $f \in \mathcal F^p(\C)$.

Therefore, the sequence $(f_{n_k}-f)_k$ is bounded in $\mathcal F^p(\C)$ and converges to $0$ in $\mathcal O(\C)$. By the hypothesis, $Tf_{n_k}$ also converges to $Tf$ in $\mathcal F^q(\C)$. 

Consequently, $TB$ is relatively compact in $\mathcal F^q(\C)$. 
\end{proof}

Note that the assumption that $T$ is a linear continuous operator on $\mathcal O(\C)$ plays an essential role in the proof of (i) $\Rightarrow$ (ii). Now, for an arbitrary operator $T: \mathcal F^p(\C) \to \mathcal F^q(\C)$ that would be not defined on $\mathcal O(\C)$, we get the following result.

\begin{lem}\label{lem-com1}
Let $p, q \in (1,\infty)$. If the operator $T: \mathcal F^p(\C) \to \mathcal F^q(\C)$ is compact, then for every sequence $(w_n)_n$ in $\C$ with $\dlim_{n \to \infty} |w_n| = \infty$, the sequence $(Tk_{w_n})_n$ converges to $0$ in $\mathcal F^q(\C)$.
\end{lem}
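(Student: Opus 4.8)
The plan is to exploit the fact that the normalized kernel functions $k_{w_n}$ form a bounded sequence in $\calF^p(\C)$ that converges to $0$ uniformly on compact subsets of $\C$, so that Lemma \ref{lem-com} would immediately apply \emph{if} $T$ were known to be continuous on $\calO(\C)$. Since we are not given that, the idea is to recover the relevant conclusion directly from the abstract definition of compactness, using weak convergence in the Hilbert-type estimates available when $p, q \in (1,\infty)$.

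First I would record that $\|k_{w_n}\|_p = 1$ for all $n$, so $(k_{w_n})_n$ is a bounded sequence in $\calF^p(\C)$. Since $T$ is compact, every subsequence of $(Tk_{w_n})_n$ has a further subsequence converging in norm in $\calF^q(\C)$ to some limit $g$. The key point is to identify $g$ as $0$. For this I would use that $k_{w_n} \to 0$ in $\calO(\C)$ as $|w_n| \to \infty$ (noted in the excerpt), hence $k_{w_n} \to 0$ pointwise. By the pointwise estimate in Lemma \ref{lem-Fp}(i), $|(Tk_{w_n})(z)|$ cannot be controlled directly, so instead I would argue via evaluation functionals: for $q \in (1,\infty)$ the point evaluations $f \mapsto f(z)$ are bounded linear functionals on $\calF^q(\C)$ (again by Lemma \ref{lem-Fp}(i)), so norm convergence $Tk_{w_{n_j}} \to g$ in $\calF^q(\C)$ forces $(Tk_{w_{n_j}})(z) \to g(z)$ for every $z$. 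On the other hand, since $p > 1$, $\calF^p(\C)$ is reflexive and the bounded sequence $(k_{w_n})_n$ has a weakly convergent subsequence; because it converges to $0$ pointwise (and point evaluations are in the dual), the weak limit must be $0$, so $k_{w_{n_j}} \rightharpoonup 0$ weakly in $\calF^p(\C)$. A compact operator maps weakly convergent sequences to norm-convergent ones with the corresponding limit, so $Tk_{w_{n_j}} \to T0 = 0$ in $\calF^q(\C)$; thus $g = 0$.

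Having shown that $0$ is the only possible subsequential limit of $(Tk_{w_n})_n$, and that every subsequence has a norm-convergent further subsequence (by compactness), a standard subsequence argument yields $Tk_{w_n} \to 0$ in $\calF^q(\C)$, which is the claim.

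The main obstacle is the passage from pointwise convergence of $(k_{w_n})_n$ to weak convergence in $\calF^p(\C)$; this is exactly where the hypothesis $p \in (1,\infty)$ (reflexivity of $\calF^p(\C)$, so that bounded sequences have weakly convergent subsequences) is used, and one must check that point evaluations together with density suffice to pin down the weak limit as $0$—equivalently, that the linear span of the kernels $k_w$ (or polynomials) is dense in the predual, so that a bounded sequence converging to $0$ pointwise converges weakly to $0$. The hypothesis $q \in (1,\infty)$ is used only to guarantee that point evaluations are continuous on $\calF^q(\C)$, which in fact holds for all $q \in (0,\infty)$ by Lemma \ref{lem-Fp}(i); the genuine restriction is on $p$.
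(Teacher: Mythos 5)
Your proposal is correct and follows essentially the same route as the paper, whose entire proof is the observation that for $p\in(1,\infty)$ the sequence $(k_{w_n})_n$ converges weakly to $0$ in $\calF^p(\C)$, so the compact operator $T$ sends it to a norm-null sequence. Your extra work (reflexivity, extraction of weakly convergent subsequences, identification of the weak limit via point evaluations, and the subsequence argument) is just a careful justification of that one asserted fact, not a different method.
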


\begin{proof}
Since $p \in (1,\infty)$, for every sequence $(w_n)_n$ in $\C$ with $\dlim_{n \to \infty} |w_n| = \infty$, the sequence $(k_{w_n})_n$ weakly converges to $0$ in $\mathcal F^p(\C)$, and hence, $(Tk_{w_n})_n$ converges to $0$ in $\mathcal F^q(\C)$.
\end{proof}

For entire functions $\psi$ and $\varphi$ on $\C$, the following quantities play an important role in the present paper:
$$
m_z(\psi,\varphi):=|\psi(z)|e^{\frac{|\varphi(z)|^2-|z|^2}{2}},\ z\in\C,
$$
and
$$
m(\psi,\varphi):=\sup_{z\in\C}m_z(\psi,\varphi).
$$


\section{Topological properties}

\subsection{Boundedness and compactness}
In this subsection we study the boundedness and compactness for weighted composition operators acting from a Fock space $\mathcal F^p(\C)$ into an another one $\mathcal F^q(\C)$.

We obtain the following necessary condition.

\begin{prop}\label{prop-nec}
Let $p,q \in (0,\infty)$. If the weighted composition operator $W_{\psi, \varphi}: \mathcal F^p(\C) \to \mathcal F^q(\C)$ is bounded, then $\psi \in \mathcal F^q(\C)$ and $m(\psi, \varphi) < \infty$. In this case, $\varphi(z) = az+b$ with $|a| \leq 1$ and 
\begin{equation}\label{z-norm}
m_z(\psi,\varphi) \le \|W_{\psi,\varphi}k_{\varphi(z)}\|_q \le \left\|W_{\psi,\varphi}\right\|,\ \forall z\in\C.
\end{equation}
\end{prop}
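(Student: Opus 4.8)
The plan is to extract all the information from the single, very flexible fact that $W_{\psi,\varphi}$ maps the normalized kernels $k_w$ boundedly, together with the pointwise bound in Lemma~\ref{lem-Fp}(i). First I would observe that $\psi = W_{\psi,\varphi}\mathbf{1}$, where $\mathbf 1$ is the constant function $1 \in \mathcal F^p(\C)$, so boundedness of the operator immediately gives $\psi \in \mathcal F^q(\C)$. The bulk of the work is the chain \eqref{z-norm}. For the right-hand inequality, simply note $\|k_{\varphi(z)}\|_p = 1$, so $\|W_{\psi,\varphi}k_{\varphi(z)}\|_q \le \|W_{\psi,\varphi}\|$. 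For the left-hand inequality, compute $W_{\psi,\varphi}k_{\varphi(z)}$ explicitly: it is the entire function $\zeta \mapsto \psi(\zeta)\exp\!\big(\overline{\varphi(z)}\,\varphi(\zeta) - |\varphi(z)|^2/2\big)$. Evaluating at $\zeta = z$ and applying Lemma~\ref{lem-Fp}(i) to this function in $\mathcal F^q(\C)$ gives
\begin{align*}
\|W_{\psi,\varphi}k_{\varphi(z)}\|_q
&\ge e^{-|z|^2/2}\,\big|(W_{\psi,\varphi}k_{\varphi(z)})(z)\big|
= e^{-|z|^2/2}\,|\psi(z)|\,\big|e^{\overline{\varphi(z)}\varphi(z) - |\varphi(z)|^2/2}\big| \\
&= |\psi(z)|\,e^{(|\varphi(z)|^2 - |z|^2)/2} = m_z(\psi,\varphi).
\end{align*}
Taking the supremum over $z$ yields $m(\psi,\varphi) \le \|W_{\psi,\varphi}\| < \infty$.

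It remains to prove $\varphi(z) = az + b$ with $|a| \le 1$. The idea is that finiteness of $m(\psi,\varphi)$ forces $|\varphi(z)|^2 \le |z|^2 + 2\log(1/|\psi(z)|) + C$ wherever $\psi$ does not vanish, i.e.\ $\varphi$ cannot grow faster than order $2$, type $1/2$ (in the relevant sense), and in fact a linear bound on $|\varphi|$ after accounting for $\psi$. More precisely, since $\psi \in \mathcal F^q(\C)$ is not identically zero (if $\psi \equiv 0$ the claim is vacuous in the sense that the operator is zero and any affine $\varphi$ works — I would dispose of this trivial case first, or absorb it), pick a point $z_0$ with $\psi(z_0) \ne 0$; by continuity $|\psi|$ is bounded below on a disc around $z_0$, and the bound $m_z(\psi,\varphi)\le m(\psi,\varphi)$ gives $e^{|\varphi(z)|^2/2} \le \frac{m(\psi,\varphi)}{|\psi(z)|} e^{|z|^2/2}$ there; combining with the upper bound $|\psi(z)| \le e^{|z|^2/2}\|\psi\|_q$ and the growth of the entire function $\psi$, one deduces $|\varphi(z)| \le |z| + o(|z|)$ along suitable sequences, hence $\varphi$ has order at most $1$ and "type" controlled so that $\varphi$ is a polynomial of degree at most $1$. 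The cleanest route: $m(\psi,\varphi) < \infty$ implies $\varphi \in \mathcal F^\infty(\C)$-type growth is not quite enough by itself, so instead I would argue that $|\varphi(z)|^2 - |z|^2 \le 2\log^+ \big(m(\psi,\varphi)/|\psi(z)|\big)$, and since $\log|\psi(z)| \ge -O(|z|^2)$ is too weak, one sharpens using that an entire $\psi$ in $\mathcal F^q$ with the property that $\varphi:=$ (candidate) has $e^{(|\varphi|^2-|z|^2)/2}|\psi|$ bounded cannot have $\varphi$ of degree $\ge 2$: if $\deg\varphi = d \ge 2$, then $|\varphi(z)|^2 \sim c|z|^{2d}$ dominates $|z|^2$ and also dominates any possible compensating decay of $|\psi|$ (since $\psi$, being entire of finite order, satisfies $\log|\psi(z)| \ge -|z|^{2d}/2$ only on a small exceptional set, not everywhere), contradicting boundedness of $m_z$. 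Having $\deg\varphi \le 1$, write $\varphi(z) = az+b$; then $m_z(\psi,\varphi) < \infty$ with $|a|>1$ would force $|\psi(z)| \le e^{(|z|^2 - |az+b|^2)/2}m(\psi,\varphi) \to 0$ super-exponentially, so $\psi \equiv 0$, again the trivial case. Hence $|a| \le 1$.

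The main obstacle is the last paragraph: turning "$m(\psi,\varphi) < \infty$" into "$\varphi$ affine with $|a|\le 1$" cleanly, since one must handle the interaction between the possible zeros/decay of $\psi$ and the growth of $\varphi$. The honest fix is to use a minimum-modulus type argument for the entire function $\psi$ (or, more simply, to note that $\psi/k_{?}$ arguments from the literature on Fock-space composition operators reduce this to the classical fact, going back to Carswell--MacCluer--Schuster, that $|\psi(z)|e^{(|\varphi(z)|^2-|z|^2)/2}$ bounded forces $\varphi(z)=az+b$, $|a|\le 1$, and $\psi(z) = \psi(0)k_{\overline{a}b}(z)$-type structure when $|a|=1$). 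I would cite or reprove that structural lemma; everything else in the proposition is the short computation above.
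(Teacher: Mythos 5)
Your proposal is correct and follows essentially the same route as the paper: testing $W_{\psi,\varphi}$ on the unit-norm functions $k_{\varphi(z)}$ and applying Lemma~\ref{lem-Fp}(i) at the point $z$ yields exactly the chain \eqref{z-norm}, hence $m(\psi,\varphi)\le\|W_{\psi,\varphi}\|<\infty$. For the structural claim that $\varphi(z)=az+b$ with $|a|\le 1$, the paper does precisely what you fall back on at the end --- it simply cites the known lemma (\cite[Proposition 2.1]{T-14}) that finiteness of $m(\psi,\varphi)$ forces $\varphi$ affine with $|a|\le 1$ --- so your exploratory (and, as you admit, incomplete) minimum-modulus sketch is not needed.
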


\begin{proof}
Obviously, $\psi = W_{\psi, \varphi}(1) \in \mathcal F^q(\C)$.

For each $w \in \C$, using $\|k_w\|_p = 1$ and Lemma \ref{lem-Fp}[(i)], we have
\begin{align*}
\|W_{\psi,\varphi}\| & \geq \|W_{\psi, \varphi}k_w\|_q \geq |W_{\psi, \varphi}k_w(z)|e^{-\frac{|z|^2}{2}}\\
& = |\psi(z)| \big| e^{\overline{w}\varphi(z) - \frac{|w|^2}{2}} \big| e^{-\frac{|z|^2}{2}}, \ \forall z \in \C.
\end{align*}
In particular, with $w =\varphi(z)$, the last inequality means that
$$
m_z(\psi,\varphi) \leq \|W_{\psi, \varphi}k_{\varphi(z)}\|_q \leq \|W_{\psi,\varphi}\|, \ \forall z \in \C.
$$
Then $m(\psi,\varphi) \leq \|W_{\psi,\varphi}\|$. And hence, by \cite[Proposition 2.1]{T-14}, $\varphi(z) = az +b$ with $|a| \leq 1$.
\end{proof}

In view of Proposition \ref{prop-nec}, throughout this paper we always assume that $\psi$ is a nonzero function in $\calF^q(\C)$ and $\varphi(z) = az + b$ with $|a| \leq 1$. 

In the case $a=0$, from Proposition \ref{prop-nec} we get

\begin{cor}\label{cor-0}
Let $p,q \in (0,\infty)$ and $\psi$ be a nonzero function in $\calF^q(\C)$. If $a=0$, i.e., $\varphi(z) = b$, then the operator $W_{\psi, \varphi}: \mathcal F^p(\C) \to \mathcal F^q(\C)$ is compact and 
$$
\|W_{\psi, \varphi}\| \leq e^{\frac{|b|^2}{2}}\|\psi\|_q.
$$
\end{cor}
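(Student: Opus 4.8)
The plan is to observe that when $\varphi$ is constant, $W_{\psi,\varphi}$ is a rank-one operator (in fact a bounded point-evaluation composed with multiplication by the fixed function $\psi$), and then to estimate its norm directly. First I would note that for any $f \in \mathcal F^p(\C)$ we have $W_{\psi,\varphi}f = f(b)\,\psi$, so $W_{\psi,\varphi}$ factors as $f \mapsto f(b) \mapsto f(b)\psi$, which makes boundedness and compactness essentially immediate once we know the point evaluation $f \mapsto f(b)$ is continuous on $\mathcal F^p(\C)$ and that $\psi \in \mathcal F^q(\C)$ (the latter holds by hypothesis). A finite-rank operator is automatically compact, so the compactness assertion needs no work beyond this factorization; alternatively, one can invoke Lemma \ref{lem-com} by checking that for any bounded sequence $(f_n)_n$ in $\mathcal F^p(\C)$ converging to $0$ in $\mathcal O(\C)$, we have $f_n(b) \to 0$, hence $W_{\psi,\varphi}f_n = f_n(b)\psi \to 0$ in $\mathcal F^q(\C)$.

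For the norm bound I would use Lemma \ref{lem-Fp}(i): for $f \in \mathcal F^p(\C)$,
\[
\|W_{\psi,\varphi}f\|_q = \|f(b)\,\psi\|_q = |f(b)|\,\|\psi\|_q \le e^{\frac{|b|^2}{2}}\|f\|_p\,\|\psi\|_q,
\]
and taking the supremum over $f$ in the unit ball of $\mathcal F^p(\C)$ gives $\|W_{\psi,\varphi}\| \le e^{\frac{|b|^2}{2}}\|\psi\|_q$ directly. I would also remark that before applying Lemma \ref{lem-Fp}(i) one should check $W_{\psi,\varphi}$ maps $\mathcal O(\C)$ continuously to itself (it clearly does, being $f \mapsto f(b)\psi$) so that Lemma \ref{lem-com} is applicable if one prefers that route over the finite-rank argument.

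There is essentially no hard step here: the content is just recognizing the rank-one structure and applying the pointwise growth estimate already established in Lemma \ref{lem-Fp}. The only mild subtlety, if one wants to be careful, is that $\|W_{\psi,\varphi}\|$ denotes the operator norm from $\mathcal F^p(\C)$ to $\mathcal F^q(\C)$, and when $p < 1$ or $q < 1$ the spaces are only complete metric spaces rather than normed spaces, so one should note that $\|\cdot\|_p$ is still a well-defined (quasi-)norm and the computation above goes through verbatim; the homogeneity $\|f(b)\psi\|_q = |f(b)|\|\psi\|_q$ uses only absolute homogeneity of $\|\cdot\|_q$, which holds for all $q \in (0,\infty)$.
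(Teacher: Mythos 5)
Your argument is exactly the paper's proof: recognize that $W_{\psi,\varphi}f = f(b)\psi$ is rank one (hence compact), and bound $|f(b)| \le e^{\frac{|b|^2}{2}}\|f\|_p$ via Lemma \ref{lem-Fp}(i) to get the operator norm estimate. The extra remarks about Lemma \ref{lem-com} and the quasi-norm cases $p<1$ or $q<1$ are harmless but not needed.
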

\begin{proof} 
By Lemma \ref{lem-Fp}[(i)], for each $f \in \mathcal F^p(\C)$,
$$
\|W_{\psi,\varphi}f\|_q = |f(b)|\|\psi\|_q \leq  e^{\frac{|b|^2}{2}} \|\psi\|_q \|f\|_p.
$$
Thus, the operator $W_{\psi, \varphi}: \mathcal F^p(\C) \to \mathcal F^q(\C)$ is bounded and 
$$
\|W_{\psi, \varphi}\| \leq e^{\frac{|b|^2}{2}}\|\psi\|_q.
$$
Moreover, $W_{\psi, \varphi}$ has rank $1$, and hence, it is compact.
\end{proof}

The case $0< |a| \leq 1$ is more complicated. At first, we consider weighted composition operators $W_{\psi, \varphi}$ acting from larger Fock spaces into smaller ones. In this case the boundedness and compactness of $W_{\psi, \varphi}$ are equivalent (see, Theorem \ref{thm-bd2} below).

To show this we will use the Berezin type integral transform
$$
B_{\psi,\varphi,q}(w): = \dfrac{q}{2 \pi}\int_{\C}|\psi(z)|^q \big|e^{\overline{w}\varphi(z) - \frac{|w|^2}{2}}\big|^q e^{-\frac{q|z|^2}{2}}dA(z) = \|W_{\psi, \varphi}k_w\|_q^q , \ w \in \C.
$$

Since $\varphi(z) = az + b$ with $0< |a| \leq 1$, we define the following positive pull-back measure $\mu_{\psi, \varphi, q}$ on $\C$ with 
$$
\mu_{\psi, \varphi, q}(E): = \dfrac{q}{2\pi}\int_{\varphi^{-1}(E)}|\psi(z)|^qe^{-\frac{q|z|^2}{2}}dA(z)
$$
for every Borel subset $E$ of $\C$.

We recall, for the reader's convenience, that for $p, q \in (0, \infty)$ a positive Borel measure $\mu$ on $\C$ is called a \textit{$(p,q)$-Fock Carleson measure} , if the embedding operator $i: \calF^p(\C) \to L^q(\C, d\mu)$ is bounded, i.e. there exists a constant $C>0$ such that for every $f \in \calF^p(\C)$,
$$
\left(\int_{\C} |f(z)|^q e^{-\frac{q|z|^2}{2}}d\mu(z) \right)^{\frac{1}{q}} \leq C \|f\|_p.
$$
We will write $\|\mu\|$ for the operator norm of $i$ from $\calF^p(\C)$ into $L^q(\C, d\mu)$ and refer the reader to \cite[Section~3]{HL-10} for more information about $(p,q)$-Fock Carleson measure.

\begin{thm} \label{thm-bd2}
Let $0< q < p < \infty$ and $\psi$ be a nonzero function in $\calF^q(\C)$ and $\varphi(z) = az + b$ with $0 < |a| \leq 1$. The following assertions are equivalent:
\begin{itemize}
 \item[(i)] The operator $W_{\psi, \varphi}: \mathcal F^p(\C) \to \mathcal F^q(\C)$ is bounded.
 \item[(ii)] The operator $W_{\psi, \varphi}: \mathcal F^p(\C) \to \mathcal F^q(\C)$ is compact.
 \item[(iii)] $m_z(\psi, \varphi) \in L^{\frac{pq}{p-q}}(\C,dA)$.
\end{itemize}
In this case, 
$$
\|W_{\psi, \varphi}\| \simeq \|m_z(\psi, \varphi)\|_{L^{\frac{pq}{p-q}}}.
$$
\end{thm}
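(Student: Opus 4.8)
The plan is to reduce the statement, via a change of variables, to the theory of $(p,q)$-Fock Carleson measures, and to dispatch the two ``elementary'' directions by hand. Since $\varphi(z)=az+b$ with $a\neq0$ is a bijection of $\C$, the identity $m_z(\psi,\varphi)^q=|\psi(z)|^q e^{\frac{q(|\varphi(z)|^2-|z|^2)}{2}}$ and the substitution $w=\varphi(z)$ give
\begin{align*}
\|W_{\psi,\varphi}f\|_q^q
&=\frac{q}{2\pi}\int_{\C}m_z(\psi,\varphi)^q\,|f(\varphi(z))|^q e^{-\frac{q|\varphi(z)|^2}{2}}\,dA(z)\\
&=\frac{q}{2\pi|a|^2}\int_{\C}m_{\varphi^{-1}(w)}(\psi,\varphi)^q\,|f(w)|^q e^{-\frac{q|w|^2}{2}}\,dA(w).
\end{align*}
This exhibits $\|W_{\psi,\varphi}f\|_q$ as the norm of $f$ under the embedding $\calF^p(\C)\hookrightarrow L^q(\C,\mu_{\psi,\varphi,q})$ (in particular $B_{\psi,\varphi,q}(w)=\|W_{\psi,\varphi}k_w\|_q^q$, as already recorded), so that $W_{\psi,\varphi}\colon\calF^p(\C)\to\calF^q(\C)$ is bounded, respectively compact, if and only if $\mu_{\psi,\varphi,q}$ is a, respectively vanishing, $(p,q)$-Fock Carleson measure, with $\|W_{\psi,\varphi}\|^q\simeq\|\mu_{\psi,\varphi,q}\|$.

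For the chain (iii)$\Rightarrow$(ii)$\Rightarrow$(i) I would not even invoke Carleson measures, working directly with the first integral above. Put $g_f:=|f|^q e^{-\frac{q|\cdot|^2}{2}}$; by Lemma \ref{lem-Fp}(i), $0\le g_f\le\|f\|_p^q$, and a change of variables gives $\|g_f\circ\varphi\|_{L^{p/q}(\C,dA)}\lesssim\|f\|_p^q$. Hölder's inequality with exponents $p/q$ and $p/(p-q)$ then yields
\[
\|W_{\psi,\varphi}f\|_q^q\le\frac{q}{2\pi}\,\big\|m_z(\psi,\varphi)^q\big\|_{L^{p/(p-q)}}\,\|g_f\circ\varphi\|_{L^{p/q}}\lesssim\|m_z(\psi,\varphi)\|_{L^{pq/(p-q)}}^q\,\|f\|_p^q,
\]
where I used $q\cdot\frac{p}{p-q}=\frac{pq}{p-q}$; this is (iii)$\Rightarrow$(i) together with the upper estimate $\|W_{\psi,\varphi}\|\lesssim\|m_z(\psi,\varphi)\|_{L^{pq/(p-q)}}$. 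For compactness, let $(f_n)$ be bounded in $\calF^p(\C)$ with $f_n\to0$ in $\calO(\C)$ and split $\frac{q}{2\pi}\int_{\C}m_z(\psi,\varphi)^q\,(g_{f_n}\!\circ\varphi)\,dA$ over $\{|z|\le R\}$ and $\{|z|>R\}$: on the first region $g_{f_n}\circ\varphi\to0$ uniformly (composition of $f_n\to0$ locally uniformly with the affine $\varphi$, which maps the disc onto a compact set) while $m_z(\psi,\varphi)^q\in L^1_{\mathrm{loc}}$, so that part $\to0$ for each fixed $R$; on the second region Hölder bounds the integral by a constant times $\big\|m_z(\psi,\varphi)^q\mathbf 1_{\{|z|>R\}}\big\|_{L^{p/(p-q)}}$, which $\to0$ as $R\to\infty$ by (iii). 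Hence $\|W_{\psi,\varphi}f_n\|_q\to0$, and Lemma \ref{lem-com} (with $T=W_{\psi,\varphi}$, continuous on $\calO(\C)$) gives compactness. The implication (ii)$\Rightarrow$(i) is trivial.

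The remaining, substantial, implication is (i)$\Rightarrow$(iii) with the matching lower estimate $\|m_z(\psi,\varphi)\|_{L^{pq/(p-q)}}\lesssim\|W_{\psi,\varphi}\|$. Here I would use the first paragraph to conclude that $\mu_{\psi,\varphi,q}$ is a $(p,q)$-Fock Carleson measure and then invoke the characterization for the lower-exponent range $q<p$ from \cite{HL-10}: such a measure is $(p,q)$-Fock Carleson --- equivalently, automatically, a vanishing one, which is exactly what forces (i) and (ii) to coincide --- precisely when the averaging function $w\mapsto\mu_{\psi,\varphi,q}(D(w,r))$ belongs to $L^{p/(p-q)}(\C,dA)$, with comparable norms. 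To turn this into (iii) one needs a reverse (sub-mean-value) estimate $m_{z_0}(\psi,\varphi)^q\lesssim\int_{D(z_0,\rho)}m_z(\psi,\varphi)^q\,dA(z)$; after the change of variables $w=\varphi(z)$ this upgrades the $L^{p/(p-q)}$ bound on the averaging function to $\|m_z(\psi,\varphi)\|_{L^{pq/(p-q)}}\lesssim\|W_{\psi,\varphi}\|$. When $|a|=1$ this is immediate, since then $\log m_z(\psi,\varphi)^q=q\log|\psi(z)|+\frac{q}{2}\big(2\re(a\bar b z)+|b|^2\big)$ is subharmonic, hence so is $m_z(\psi,\varphi)^q$. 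The main obstacle is the case $0<|a|<1$, where $m_z(\psi,\varphi)^q$ fails to be subharmonic (its Laplacian acquires the defect $-2q(1-|a|^2)$): one must absorb this Gaussian defect --- the function $m_z(\psi,\varphi)^q e^{\frac{q(1-|a|^2)}{2}|z|^2}$ is subharmonic, and the extra factor is controlled over discs of bounded radius after integration --- or else appeal directly to the lattice / atomic-decomposition test functions underlying \cite{HL-10}; in any case the sharp $L^{pq/(p-q)}$-information is strictly stronger than what testing on individual normalized kernels $k_w$ produces (that only gives $m(\psi,\varphi)\le\|W_{\psi,\varphi}\|$, as in Proposition \ref{prop-nec}), so some genuinely two-sided Carleson-measure input is unavoidable here.
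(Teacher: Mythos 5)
Your handling of (ii)$\Rightarrow$(i) and (iii)$\Rightarrow$(ii) coincides with the paper's: the same H\"older splitting with exponents $p/q$ and $p/(p-q)$ gives boundedness with the upper bound $\|W_{\psi,\varphi}\|\lesssim\|m_z(\psi,\varphi)\|_{L^{pq/(p-q)}}$, and the same $\{|z|\le R\}$ versus $\{|z|>R\}$ decomposition combined with Lemma \ref{lem-com} gives compactness. The divergence is in (i)$\Rightarrow$(iii). The paper also reduces to the $(p,q)$-Fock Carleson property of $\lambda_{\psi,\varphi,q}$, but it then uses the \emph{Berezin-transform} characterization from \cite[Theorem 3.3]{HL-10}, namely $\widetilde{\lambda_{\psi,\varphi,q}}=B_{\psi,\varphi,q}=\|W_{\psi,\varphi}k_{\cdot}\|_q^q\in L^{p/(p-q)}(\C,dA)$, and obtains the pointwise lower bound $B_{\psi,\varphi,q}(\varphi(z))\ge m_z(\psi,\varphi)^q$ for free from the point-evaluation estimate of Lemma \ref{lem-Fp}(i); a change of variables then yields (iii) and the lower norm estimate. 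This sidesteps entirely the sub-mean-value inequality that your averaging-function route requires. (Relatedly, your closing remark is slightly off: the paper \emph{does} get the sharp $L^{pq/(p-q)}$ information by testing on the kernels $k_w$ --- the point is that it retains the whole function $w\mapsto\|W_{\psi,\varphi}k_w\|_q^q$ and feeds it into \cite{HL-10}, rather than only its supremum.)

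Your route can be closed, but the step you flag as the main obstacle is justified incorrectly as written. Write $m_z(\psi,\varphi)^q=e^{\frac{q|b|^2}{2}}|G(z)|^q e^{-\beta|z|^2}$ with $G(z)=\psi(z)e^{a\overline{b}z}$ entire and $\beta=\frac{q(1-|a|^2)}{2}$. Your claim that the subharmonic majorant $m_z(\psi,\varphi)^q e^{\beta|z|^2}=e^{\frac{q|b|^2}{2}}|G(z)|^q$ differs from $m_z(\psi,\varphi)^q$ by a factor that is ``controlled over discs of bounded radius'' is false: on $D(z_0,\rho)$ the ratio $e^{\beta(|z|^2-|z_0|^2)}$ can be as large as $e^{\beta(2\rho|z_0|+\rho^2)}$, so this comparison loses an exponentially unbounded constant as $|z_0|\to\infty$. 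The correct repair is the standard \emph{centered}-Gaussian factorization: for fixed $z_0$, $|G(z)|^q e^{-\beta|z|^2}=|G(z)e^{-\frac{2\beta}{q}\overline{z_0}z}|^q\,e^{\beta|z_0|^2}\,e^{-\beta|z-z_0|^2}$, where now $|G(z)e^{-\frac{2\beta}{q}\overline{z_0}z}|^q$ is subharmonic and the remaining factor $e^{-\beta|z-z_0|^2}$ lies in $[e^{-\beta\rho^2},1]$ on $D(z_0,\rho)$ uniformly in $z_0$; this yields $m_{z_0}(\psi,\varphi)^q\le\frac{e^{\beta\rho^2}}{\pi\rho^2}\int_{D(z_0,\rho)}m_z(\psi,\varphi)^q\,dA(z)$ with a constant depending only on $\rho$, $q$ and $|a|$. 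With that substitution your argument is a valid, genuinely different proof of (i)$\Rightarrow$(iii); the paper's Berezin-transform route is simply shorter because the needed pointwise lower bound costs nothing.
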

\begin{proof}
(ii) $\Rightarrow$ (i) is obvious.

\medskip

(i) $\Rightarrow$ (iii). Assume that the operator $W_{\psi, \varphi}: \mathcal F^p(\C) \to \mathcal F^q(\C)$ is bounded. Then for each $f \in \mathcal F^p(\C)$,
\begin{align*}
\|W_{\psi, \varphi}\| \|f\|_p &\geq \|W_{\psi,\varphi}f\|_q = \left( \dfrac{q}{2 \pi} \int_{\C} |\psi(z)|^q |f(\varphi(z))|^q e^{-\frac{q|z|^2}{2}}dA(z) \right)^{\frac{1}{q}}\\
& = \left( \int_{\C} |f(z)|^q d\mu_{\psi, \varphi, q}(z) \right)^{\frac{1}{q}} = \left( \int_{\C}|f(z)|^q e^{-\frac{q|z|^2}{2}}d\lambda_{\psi, \varphi, q}(z) \right)^{\frac{1}{q}},
\end{align*}
where $d\lambda_{\psi, \varphi, q}(z) = e^{\frac{q|z|^2}{2}}d\mu_{\psi, \varphi, q}(z)$. The last inequality means that $\lambda_{\psi, \varphi, q}$ is a $(p,q)$ Fock-Carleson measure. Then by \cite[Theorem 3.3]{HL-10}, we get
$$
\widetilde{\lambda_{\psi, \varphi, q}}(w): = \int_{\C}|k_w(z)|^q e^{-\frac{q|z|^2}{2}}d\lambda_{\psi, \varphi, q}(z) \in L^{\frac{p}{p-q}}(\C,dA).
$$
Clearly, for all $w \in \C$,
\begin{align*}
\widetilde{\lambda_{\psi, \varphi, q}}(w)& = \int_{\C}|k_w(z)|^q e^{-\frac{q|z|^2}{2}}d\lambda_{\psi, \varphi, q}(z) = \int_{\C}|k_w(z)|^q d\mu_{\psi, \varphi, q}(z) \\
& = \dfrac{q}{2\pi} \int_{\C} |\psi(z)|^q |k_w(\varphi(z))|^q e^{-\frac{q|z|^2}{2}}dA(z) = B_{\psi, \varphi, q}(w).
\end{align*}
Consequently, $B_{\psi, \varphi, q}(w) \in L^{\frac{p}{p-q}}(\C,dA)$.

On the other hand, using Lemma \ref{lem-Fp}[(i)], we have that, for all $w, z \in \C$,
\begin{align*}
B_{\psi,\varphi,q}(w) & = \|W_{\psi, \varphi}k_w\|_q^q \geq |W_{\psi, \varphi}k_w(z)|^q e^{-\frac{q|z|^2}{2}}\\
& = |\psi(z)|^q \big| e^{\overline{w}\varphi(z) - \frac{|w|^2}{2}} \big|^q e^{-\frac{q|z|^2}{2}}.
\end{align*}
In particular, with $w = \varphi(z)$, we have
$$
B_{\psi,\varphi,q}(\varphi(z)) \geq m_z(\psi, \varphi)^{q}, \ \forall z \in \C,
$$
and hence
\begin{align}\label{MB}
\int_{\C} m_z(\psi, \varphi)^{\frac{pq}{p-q}}dA(z) &\leq \int_{\C} B_{\psi,\varphi,q}(\varphi(z))^{\frac{p}{p-q}}dA(z) \\
& = |a|^{-2}\int_{\C} B_{\psi,\varphi,q}(w)^{\frac{p}{p-q}}dA(w) < \infty \notag.
\end{align}
Thus, $m_z(\psi, \varphi) \in L^{\frac{pq}{p-q}}(\C,dA)$.

Moreover, by \cite[Theorem 3.3]{HL-10}, 
\begin{align*}
\|W_{\psi, \varphi}\|^q = \|\lambda_{\psi, \varphi, q}\|^q \simeq \|\widetilde{\lambda_{\psi, \varphi, q}}\|_{L^{\frac{p}{p-q}}} = \|B_{\psi, \varphi, q}\|_{L^{\frac{p}{p-q}}}.
\end{align*}
From this and \eqref{MB} it follows that
\begin{align}
\|m_z(\psi, \varphi)\|_{L^{\frac{pq}{p-q}}} & \leq  \left( |a|^{-2} \int_{\C} B_{\psi,\varphi,q}(z)^{\frac{p}{p-q}}dA(z) \right)^{\frac{p-q}{pq}} \nonumber \\
& \simeq |a|^{-\frac{2(p-q)}{pq}} \| W_{\psi, \varphi} \| \label{est-1}.
\end{align}

\medskip

(iii) $\Rightarrow$ (ii). 
For each function $f \in \mathcal F^p(\C)$, using H\"older's inequality, we obtain
\begin{align*}
\|W_{\psi, \varphi}f\|_q^q &= \dfrac{q}{2\pi} \int_{\C}  m_z(\psi, \varphi)^{q} |f(\varphi(z))|^q e^{-\frac{q|\varphi(z)|^2}{2}}dA(z)\\
& \leq \dfrac{q}{2\pi}\left(\int_{\C} |f(\varphi(z))|^p e^{-\frac{p|\varphi(z)|^2}{2}} dA(z)\right)^{\frac{q}{p}} \left(\int_{\C} m_z(\psi,\varphi)^{\frac{pq}{p-q}}dA(z)\right)^{\frac{p-q}{p}} \\
& \leq \dfrac{q}{2\pi} \left(\dfrac{2\pi}{p |a|^2}\right)^{\frac{q}{p}} \|f\|_p^q \left(\int_{\C} m_z(\psi,\varphi)^{\frac{pq}{p-q}}dA(z)\right)^{\frac{p-q}{p}} \\
& \leq \dfrac{q}{2\pi} \left(\dfrac{2\pi}{p|a|^2}\right)^{\frac{q}{p}} \|f\|_p^q  \left( \|m_z(\psi, \varphi)\|_{L^{\frac{pq}{p-q}}} \right)^q.
\end{align*}
The last inequality means that $W_{\psi, \varphi}: \calF^p(\C) \to \calF^q(\C)$ is bounded and
\begin{equation} \label{est-2}
\|W_{\psi, \varphi}\| \leq \left( \dfrac{q}{2\pi} \right)^{\frac{1}{q}} \left(\dfrac{2\pi}{p|a|^2}\right)^{\frac{1}{p}} \|m_z(\psi, \varphi)\|_{L^{\frac{pq}{p-q}}}.
\end{equation}

Next, let $(f_n)_n$ be an arbitrary bounded sequence in $\mathcal F^p(\C)$ converging to $0$ in $\mathcal O(\C)$. For each $n \in \N$ and $R>0$,
\begin{align*}
\|W_{\psi,\varphi}f_n\|_q^q & = \dfrac{q}{2\pi}\int_{\C} |\psi(z)|^q |f_n(\varphi(z))|^q e^{-\frac{q|z|^2}{2}}dA(z) \\ 
& = \dfrac{q}{2\pi} \left( \int_{|z| \leq R} + \int_{|z| > R} \right) |\psi(z)|^q |f_n(\varphi(z))|^q e^{-\frac{q|z|^2}{2}}dA(z)\\
& = \calI(n,R) + \calJ(n,R).
\end{align*}
Obviously,
\begin{align*}
\calI(n,R) &\leq  \dfrac{q}{2\pi} \max_{|z| \leq R} |f_n(\varphi(z))|^q \int_{|z| \leq R}|\psi(z)|^q e^{-\frac{q|z|^2}{2}}dA(z) 
& \leq  \|\psi\|_q^q \max_{|z| \leq R} |f_n(\varphi(z))|^q.
\end{align*}
For $\calJ(n,R)$, again using H\"older's inequality, we get
\begin{align*}
\mathcal J(n,R) &= \dfrac{q}{2\pi} \int_{|z| > R}  m_z(\psi, \varphi)^{q} |f_n(\varphi(z))|^q e^{-\frac{q|\varphi(z)|^2}{2}}dA(z)\\
& \leq \dfrac{q}{2\pi}\left(\int_{|z| > R} |f_n(\varphi(z))|^p e^{-\frac{p|\varphi(z)|^2}{2}} dA(z)\right)^{\frac{q}{p}} \left(\int_{|z| > R} m_z(\psi,\varphi)^{\frac{pq}{p-q}}dA(z)\right)^{\frac{p-q}{p}} \\
& \leq \dfrac{q}{2\pi} \left(\dfrac{2\pi}{p|a|^2}\right)^{\frac{q}{p}} \|f_n\|_p^q \left(\int_{|z| > R} m_z(\psi,\varphi)^{\frac{pq}{p-q}}dA(z)\right)^{\frac{p-q}{p}} \\
& \leq M^q \left(\int_{|z| > R} m_z(\psi,\varphi)^{\frac{pq}{p-q}}dA(z)\right)^{\frac{p-q}{p}},
\end{align*}
where 
$$
M^q: = \dfrac{q}{2\pi} \left(\dfrac{2\pi}{p}\right)^{\frac{q}{p}} \sup_n \|f_n\|_p^q < \infty.
$$
Consequently, for every $R>0$, letting $n \to \infty$, we obtain
\begin{align*}
\limsup_{n \to \infty} \|W_{\psi, \varphi}f_n\|_q^q &\leq  \limsup_{n \to \infty} \big( \mathcal I(n,R) + \mathcal J(n,R) \big) \\
& \leq M^q \left(\int_{|z| > R} m_z(\psi,\varphi)^{\frac{pq}{p-q}}dA(z)\right)^{\frac{p-q}{p}}.
\end{align*}
Since $ m_z(\psi,\varphi) \in L^{\frac{pq}{p-q}}(\C,dA)$, letting $R \to \infty$, we conclude that $W_{\psi,\varphi}f_n$ converges to $0$ in $\mathcal F^q(\C)$ as $n \to \infty$.

Consequently, by Lemma \ref{lem-com}, the operator $W_{\psi, \varphi}: \calF^p(\C) \to \calF^q(C)$ is compact.

Moreover, the desired estimates for $\|W_{\psi, \varphi}\|$ follow from \eqref{est-1} and \eqref{est-2}.
\end{proof}

For weighted composition operators $W_{\psi, \varphi}$ acting from smaller Fock spaces into larger ones, we get the following result.

\begin{thm}\label{thm-bd1}
Let $0< p \leq q < \infty$ and $\psi$ be a nonzero function in $\calF^q(\C)$ and $\varphi(z) = az + b$ with $0 < |a| \leq 1$.
\begin{itemize}
 \item[(a)] The operator $W_{\psi, \varphi}: \mathcal F^p(\C) \to \mathcal F^q(\C)$ is bounded if and only if
$m(\psi, \varphi) < \infty$. Moreover, 
\begin{equation*}
m(\psi,\varphi) \leq \left\|W_{\psi,\varphi}\right\|  \leq \left(\frac{q}{p\left|a\right|^2}\right)^{1/q} m(\psi,\varphi). 
\end{equation*}

\item[(b)] The operator $W_{\psi, \varphi}: \mathcal F^p(\C) \to \mathcal F^q(\C)$ is compact if and only if $\displaystyle \lim_{|z| \to \infty} m_z(\psi, \varphi) = 0$.
\end{itemize}
\end{thm}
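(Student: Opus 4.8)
The plan is to reduce both assertions to an elementary affine change of variables $w=\varphi(z)=az+b$, combined with the pointwise bound of Lemma~\ref{lem-Fp}[(i)] and the inclusion estimate of Lemma~\ref{lem-pq}, and to treat compactness through the sequential criterion of Lemma~\ref{lem-com}; no adjoint operators enter. For part (a), the lower bound $m(\psi,\varphi)\le\|W_{\psi,\varphi}\|$ is already furnished by \eqref{z-norm} in Proposition~\ref{prop-nec}, once $W_{\psi,\varphi}$ is known to be bounded. For the upper bound (which also shows that $m(\psi,\varphi)<\infty$ suffices for boundedness), I would use the identity $|\psi(z)|^qe^{-q|z|^2/2}=m_z(\psi,\varphi)^qe^{-q|\varphi(z)|^2/2}$ to write
$$
\|W_{\psi,\varphi}f\|_q^q=\frac{q}{2\pi}\int_\C m_z(\psi,\varphi)^q|f(\varphi(z))|^qe^{-\frac{q|\varphi(z)|^2}{2}}\,dA(z)\le m(\psi,\varphi)^q\cdot\frac{q}{2\pi}\int_\C|f(\varphi(z))|^qe^{-\frac{q|\varphi(z)|^2}{2}}\,dA(z),
$$
and then the change of variables $w=az+b$ identifies the last expression with $|a|^{-2}m(\psi,\varphi)^q\|f\|_q^q$. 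Since $p\le q$, Lemma~\ref{lem-pq} gives $\|f\|_q\le(q/p)^{1/q}\|f\|_p$, so $\|W_{\psi,\varphi}f\|_q\le\bigl(q/(p|a|^2)\bigr)^{1/q}m(\psi,\varphi)\|f\|_p$, which is the claimed estimate.

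For the sufficiency in part (b), assume $\lim_{|z|\to\infty}m_z(\psi,\varphi)=0$; then $m(\psi,\varphi)<\infty$, so $W_{\psi,\varphi}$ is bounded by (a), and I would verify condition (ii) of Lemma~\ref{lem-com}. Given a bounded sequence $(f_n)$ in $\calF^p(\C)$ with $f_n\to0$ in $\calO(\C)$, I would split $\|W_{\psi,\varphi}f_n\|_q^q$ over $\{|z|\le R\}$ and $\{|z|>R\}$: the first piece is at most $\|\psi\|_q^q\max_{|z|\le R}|f_n(\varphi(z))|^q$, which tends to $0$ as $n\to\infty$ by uniform convergence on compacta, while for the second piece the identity and substitution from part (a) give the bound $|a|^{-2}\bigl(\sup_{|z|>R}m_z(\psi,\varphi)\bigr)^q\cdot\tfrac{q}{p}\sup_n\|f_n\|_p^q$. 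Letting $n\to\infty$ and then $R\to\infty$ forces $\|W_{\psi,\varphi}f_n\|_q\to0$, so $W_{\psi,\varphi}$ is compact.

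For the necessity, suppose $W_{\psi,\varphi}$ is compact; then it is bounded and, since $\psi$ and $\varphi$ are entire, continuous as a map of $\calO(\C)$ into itself, so Lemma~\ref{lem-com} applies. For any $(z_n)$ with $|z_n|\to\infty$ I would set $w_n:=\varphi(z_n)=az_n+b$; as $a\ne0$ we have $|w_n|\to\infty$, hence $(k_{w_n})$ has norm $1$ in $\calF^p(\C)$ and converges to $0$ in $\calO(\C)$, so Lemma~\ref{lem-com} gives $\|W_{\psi,\varphi}k_{w_n}\|_q\to0$; then \eqref{z-norm} yields $m_{z_n}(\psi,\varphi)\le\|W_{\psi,\varphi}k_{\varphi(z_n)}\|_q\to0$, and since the sequence was arbitrary, $\lim_{|z|\to\infty}m_z(\psi,\varphi)=0$. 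The calculations are routine bookkeeping; the point deserving care is that $0<|a|$ is exactly what guarantees $|\varphi(z_n)|\to\infty$ whenever $|z_n|\to\infty$, which makes the reproducing kernels $k_{\varphi(z_n)}$ admissible test vectors in Lemma~\ref{lem-com}, together with the easy verification that $W_{\psi,\varphi}$ is continuous on $\calO(\C)$ so that that lemma is indeed applicable.
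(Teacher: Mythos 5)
Your proposal is correct and follows essentially the same route as the paper: the lower bound and necessity in (a) via \eqref{z-norm}, the upper bound via the identity $|\psi(z)|e^{-|z|^2/2}=m_z(\psi,\varphi)e^{-|\varphi(z)|^2/2}$ plus the substitution $w=az+b$ and Lemma~\ref{lem-pq}, and compactness in (b) via the test functions $k_{\varphi(z_n)}$ together with the $\{|z|\le R\}$/$\{|z|>R\}$ splitting and Lemma~\ref{lem-com}. No substantive differences from the paper's argument.
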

\begin{proof}
For $p=q$, the results were proved in \cite{HK-16}. Hereby we sketch the proof in the case $p \leq q$ for the sake of the completeness.

\medskip

(a) The necessity follows from Proposition \ref{prop-nec}. 
Now assume that $m(\psi, \varphi) < \infty$. Then using Lemma \ref{lem-pq}, we have that for every $f \in \mathcal F^p(\C)$,
\begin{align*}
\|W_{\psi, \varphi}f\|_q &\leq m(\psi, \varphi) \left( \dfrac{q}{2\pi}\int_{\C} |f(\varphi(z))|^q e^{-\frac{q|\varphi(z)|^2}{2}}dA(z) \right)^{\frac{1}{q}} \\
& = m(\psi,\varphi) \dfrac{1}{|a|^{\frac{2}{q}}}  \|f\|_q \leq m(\psi,\varphi) \left(\dfrac{q}{p|a|^2}\right)^{\frac{1}{q}} \|f\|_p.
\end{align*}

Consequently, $W_{\psi, \varphi}: \mathcal F^p(\C) \to \mathcal F^q(\C)$ is bounded and 
$$
\left\|W_{\psi,\varphi}\right\|  \leq \left(\frac{q}{p\left|a\right|^2}\right)^{\frac{1}{q}} m(\psi,\varphi),
$$
which and \eqref{z-norm} imply the desired estimates for $\|W_{\psi, \varphi}\|$.

\medskip

(b)
\textbf{Necessary.} Suppose that $W_{\psi,\varphi}: \mathcal F^p(\C) \to \mathcal F^q(\C)$ is compact.
For every sequence $(z_n)_n$ in $\C$ converging to $\infty$, we have that $k_{\varphi(z_n)}$ converges to $0$ in $\mathcal O(\C)$. Therefore, by \eqref{z-norm} and Lemma \ref{lem-com},
$$
m_{z_n}(\psi,\varphi) \leq \|W_{\psi,\varphi}k_{\varphi(z_n)}\|_q \to 0 \text{ as } n \to \infty.
$$
From this, $\lim_{|z| \to \infty} m_z(\psi,\varphi) = 0$.

\textbf{Sufficiency.} By part (a), the operator $W_{\psi, \varphi}: \calF^p(\C) \to \calF^q(\C)$ is bounded.

Let $(f_n)_n$ be an arbitrary bounded sequence in $\mathcal F^p(\C)$ converging to $0$ in $\mathcal O(\C)$.
Then for each $n \in \N$ and $R>0$, using Lemma \ref{lem-pq}, we have
\begin{align*}
\|W_{\psi,\varphi}f_n\|_q^q & = \dfrac{q}{2\pi}\int_{\C} |\psi(z)|^q |f_n(\varphi(z))|^q e^{-\frac{q|z|^2}{2}}dA(z) \\ 
& = \dfrac{q}{2\pi} \left( \int_{|z| \leq R} + \int_{|z| > R} \right) |\psi(z)|^q |f_n(\varphi(z))|^q e^{-\frac{q|z|^2}{2}}dA(z) \\
& \leq  \dfrac{q}{2\pi} \max_{|z| \leq R} |f_n(\varphi(z))|^q \int_{|z| \leq R}|\psi(z)|^q e^{-\frac{q|z|^2}{2}}dA(z) \\
& + \dfrac{q}{2\pi} \sup_{|z|>R} m_z(\psi,\varphi)^{q} \int_{|z|>R} |f_n(\varphi(z))|^q e^{-\frac{q|\varphi(z)|^2}{2}}dA(z) \\
& \leq  \|\psi\|_q^q \max_{|z| \leq R} |f_n(\varphi(z))|^q + \dfrac{\|f_n\|_q^q}{ |a|^2} \sup_{|z|>R} m_z(\psi,\varphi)^{q}\\
& \leq  \|\psi\|_q^q \max_{|z| \leq R} |f_n(\varphi(z))|^q + \dfrac{qM^q}{ p|a|^2} \sup_{|z|>R} m_z(\psi,\varphi)^{q},
\end{align*}
where $M: = \sup_n \|f_n\|_p < \infty$.

From this, letting $n \to \infty$, and then $R \to \infty$, we get that the sequence $W_{\psi,\varphi}f_n$ converges to $0$ in $\mathcal F^q(\C)$.

Therefore, by Lemma \ref{lem-com}, $W_{\psi, \varphi}$ is a compact operator from $\mathcal F^p(\C)$ into $\mathcal F^q(\C)$.
\end{proof}

To end this subsection we give a complete characterization for the boundedness and compactness of composition operators $C_{\varphi}$.

\begin{cor}\label{cor-co-1}
Let $0< p \leq q < \infty$.
\begin{itemize}
 \item[(a)] The operator $C_{\varphi}: \mathcal F^p(\C) \to \mathcal F^q(\C)$ is bounded if and only if 
\begin{equation}\label{eq-vp}
\varphi(z) = \begin{cases}
az + b, \ \ \text{ if } |a| < 1, \\
az, \ \ \ \ \ \ \text{ if } |a| = 1.  
\end{cases}
\end{equation}
\item[(b)] The operator $C_{\varphi}: \mathcal F^p(\C) \to \mathcal F^q(\C)$ is compact if and only if $\varphi(z) = az + b$ with $|a| < 1$.
\end{itemize}
\end{cor}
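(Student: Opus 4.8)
The plan is to derive everything from Theorem~\ref{thm-bd1} and Corollary~\ref{cor-0} by specializing $\psi\equiv1$, which is a nonzero element of $\calF^q(\C)$ for every $q$ (constant functions lie in all Fock spaces), so that $C_\varphi=W_{1,\varphi}$. First I would dispose of the necessity of the affine form: by Proposition~\ref{prop-nec}, if $C_\varphi:\calF^p(\C)\to\calF^q(\C)$ is bounded, then $\varphi(z)=az+b$ with $|a|\le1$, so throughout we may assume $\varphi$ has this shape. If $a=0$, then $\varphi$ is constant and Corollary~\ref{cor-0} already gives that $C_\varphi$ is bounded and compact, which is consistent with both (a) and (b) since $a=0$ falls under the case $|a|<1$. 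Hence the remaining work concerns $0<|a|\le1$, where Theorem~\ref{thm-bd1} applies verbatim.

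The key computation is to expand, for $\varphi(z)=az+b$,
\[
m_z(1,\varphi)^2=e^{|\varphi(z)|^2-|z|^2}=\exp\!\big((|a|^2-1)|z|^2+2\re(a\overline b\,z)+|b|^2\big),
\]
and then split into three subcases. If $|a|<1$, the quadratic term $(|a|^2-1)|z|^2$ dominates the linear term $2\re(a\overline b\,z)$; the exponent is a concave quadratic in $z\in\R^2$, hence bounded above on $\C$ and tending to $-\infty$ as $|z|\to\infty$. Thus $m(1,\varphi)<\infty$ and $\dlim_{|z|\to\infty}m_z(1,\varphi)=0$, so by Theorem~\ref{thm-bd1}(a)--(b) the operator $C_\varphi$ is both bounded and compact. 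If $|a|=1$ and $b=0$, the exponent vanishes identically, so $m_z(1,\varphi)\equiv1$; then $m(1,\varphi)=1<\infty$ gives boundedness, while $m_z(1,\varphi)\not\to0$ shows $C_\varphi$ is not compact. If $|a|=1$ and $b\ne0$, I would exhibit a ray along which the linear term blows up: taking $z=t\,\overline a\,b/|b|$ with $t\to+\infty$ gives $a\overline b\,z=|b|t$, so $\re(a\overline b\,z)=|b|t\to+\infty$; hence $m(1,\varphi)=\infty$ and by Theorem~\ref{thm-bd1}(a) the operator $C_\varphi$ is unbounded, a fortiori not compact.

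Finally I would assemble the cases: $C_\varphi$ is bounded exactly when $|a|<1$ (with arbitrary $b$) or $|a|=1$ with $b=0$, which is precisely the dichotomy~\eqref{eq-vp}; and it is compact exactly when $|a|<1$, i.e. $\varphi(z)=az+b$ with $|a|<1$. There is no serious obstacle here --- the argument is a direct specialization of the earlier theorems --- the only points requiring a moment's care are the sign/growth analysis of the mixed quadratic-plus-linear exponent and, in the subcase $|a|=1$, $b\ne0$, actually producing a direction making $\re(a\overline b\,z)$ unbounded rather than merely asserting it; the applicability of Lemma~\ref{lem-com} is automatic since $C_\varphi$ is a continuous self-map of $\calO(\C)$ and is already invoked within Theorem~\ref{thm-bd1}.
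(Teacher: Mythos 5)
Your proposal is correct and takes essentially the same route as the paper: specialize $\psi\equiv1$ in Theorem~\ref{thm-bd1} and Corollary~\ref{cor-0} (with Proposition~\ref{prop-nec} supplying the affine form of $\varphi$) and determine when $m(1,\varphi)<\infty$ and when $m_z(1,\varphi)\to0$. The paper dismisses the growth analysis of the exponent $(|a|^2-1)|z|^2+2\re(a\overline b\,z)+|b|^2$ with the word ``clearly''; you have simply written out that case analysis, correctly.
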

\begin{proof}
Clearly, $m(1, \varphi) < \infty$ if and only if $\varphi$ as in \eqref{eq-vp}.
Then the assertion immediately follows from Theorem \ref{thm-bd1} and Corollary \ref{cor-0}.
\end{proof}

\begin{cor}\label{cor-co-2}
Let $0< q < p < \infty$. The following assertions are equivalent:
\begin{itemize}
 \item[(i)] The operator $C_{\varphi}: \mathcal F^p(\C) \to \mathcal F^q(\C)$ is bounded.
\item[(ii)] The operator $C_{\varphi}: \mathcal F^p(\C) \to \mathcal F^q(\C)$ is compact.
\item[(iii)] $\varphi(z) = az + b$ with $|a| < 1$.
\end{itemize}
\end{cor}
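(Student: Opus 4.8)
The plan is to specialise Theorem~\ref{thm-bd2} to the case $\psi\equiv 1$ and thereby reduce the whole statement to an elementary integrability question for the explicit function
$$
m_z(1,\varphi)=e^{\frac{|\varphi(z)|^2-|z|^2}{2}}.
$$
Since $\psi\equiv 1\in\calF^q(\C)$, Theorem~\ref{thm-bd2} (applicable precisely when $0<|a|\le 1$) already gives, for $\varphi(z)=az+b$ with $a\ne 0$, the equivalence of ``$C_\varphi$ bounded'', ``$C_\varphi$ compact'' and ``$m_z(1,\varphi)\in L^{\frac{pq}{p-q}}(\C,dA)$''. The degenerate case $a=0$ is covered by Corollary~\ref{cor-0}, which shows that $C_\varphi$ is then compact (it has rank one), while Proposition~\ref{prop-nec} forces $\varphi(z)=az+b$ with $|a|\le 1$ whenever $C_\varphi$ is bounded. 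Hence it remains only to verify the following computational claim: with $s:=\frac{pq}{p-q}\in(0,\infty)$, one has $m_z(1,\varphi)\in L^{s}(\C,dA)$ if and only if $|a|<1$.

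For the ``if'' direction, suppose $|a|<1$. Writing out
$$
|\varphi(z)|^2-|z|^2=(|a|^2-1)|z|^2+2\re(a\bar b\,z)+|b|^2\le -(1-|a|^2)|z|^2+2|a||b||z|+|b|^2
$$
and completing the square in $|z|$, one gets a bound $|\varphi(z)|^2-|z|^2\le -c|z|^2+C$ with $c:=\tfrac12(1-|a|^2)>0$ and $C$ depending only on $a,b$. Therefore
$$
\int_{\C}m_z(1,\varphi)^{s}\,dA(z)\le e^{\frac{sC}{2}}\int_{\C}e^{-\frac{sc}{2}|z|^2}\,dA(z)<\infty .
$$

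For the ``only if'' direction, suppose $|a|=1$. Then $|\varphi(z)|^2-|z|^2=2\re(a\bar b\,z)+|b|^2$. If $b\ne 0$, the linear functional $z\mapsto\re(a\bar b\,z)$ is unbounded above, so $m_z(1,\varphi)$ is not even a bounded function on $\C$; if $b=0$, then $m_z(1,\varphi)\equiv 1$, which fails to lie in $L^{s}(\C,dA)$ because $\C$ has infinite area. In either case $m_z(1,\varphi)\notin L^{s}(\C,dA)$, so by Theorem~\ref{thm-bd2} the operator $C_\varphi$ is not bounded (a fortiori not compact). Combining these two directions with the reductions from the first paragraph yields the asserted chain $(i)\Leftrightarrow(ii)\Leftrightarrow(iii)$.

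No genuine obstacle is anticipated; the one point deserving emphasis is the contrast with Corollary~\ref{cor-co-1}: in the present range $q<p$ the ``unitary'' maps $\varphi(z)=az$ with $|a|=1$ no longer induce bounded operators, and the reason is exactly that the (bounded) constant function $m_z(1,\varphi)\equiv 1$ is not integrable over the whole plane --- which becomes decisive only because $q<p$ makes $L^{pq/(p-q)}$ a bona fide $L^{s}$-space with finite exponent $s$.
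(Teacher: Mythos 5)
Your proof is correct and follows the paper's own route: the paper likewise reduces everything to Theorem~\ref{thm-bd2} with $\psi\equiv1$ together with Corollary~\ref{cor-0} and Proposition~\ref{prop-nec}, asserting without detail that $m_z(1,\varphi)\in L^{pq/(p-q)}(\C,dA)$ if and only if $|a|<1$, which you verify explicitly. One small repair in the case $|a|=1$, $b\ne0$: unboundedness of $m_z(1,\varphi)$ does not by itself preclude membership in $L^{s}$; the clean reason is that $m_z(1,\varphi)^{s}=e^{\frac{s}{2}\left(2\re(a\bar b z)+|b|^2\right)}\ge e^{\frac{s|b|^2}{2}}$ on the half-plane $\{z:\re(a\bar b z)\ge 0\}$, which has infinite area.
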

\begin{proof}
We can easily show that for each affine function $\varphi(z) = az + b$ as in \eqref{eq-vp}, $m(1,\varphi) \in L^{\frac{pq}{p-q}}(\C,dA)$ if and only if $|a| < 1$.
Then the assertion follows from Theorem \ref{thm-bd2} and Corollary \ref{cor-0}.
\end{proof}

\subsection{Essential norm} 
In a general setting, let $X, Y$ be Banach spaces, and $\mathcal{K}(X,Y)$ be the set of all compact operators from $X$ into $Y$. The essential norm of a bounded linear operator $L: X \to Y$, denoted as $\|L\|_e$, is defined as
$$
\|L\|_e=\inf\{\|L-K\|: K\in\mathcal{K}(X,Y)\}.
$$
Clearly, $L$ is compact if and only if $\|L\|_e=0$.

In view of Corollary \ref{cor-0}, Theorem \ref{thm-bd2} and Lemma \ref{lem-com1}, we study essential norm of $W_{\psi, \varphi}: \mathcal F^p(\C) \to \mathcal F^q(\C)$ when $1< p \leq q < \infty$ and $\varphi(z)=az+b$ with $0< |a|\le1$.

The main result is stated as follows.

\begin{thm}\label{thm-essnorm}
Let $1 < p \leq q < \infty$ and $W_{\psi,\varphi}: \calF^p(\C) \to \calF^q(\C)$ be a bounded weighted composition operator induced by a nonzero entire function $\psi \in \calF^q(\C)$ and an affine function $\varphi(z)=az+b$ with $0 < |a| \leq 1$. Then
$$
\|W_{\psi,\varphi}\|_e \simeq \limsup_{|z|\to\infty} m_z(\psi,\varphi).
$$
More precisely,
$$
\limsup_{|z|\to\infty}m_z(\psi,\varphi) \leq \|W_{\psi,\varphi}\|_e \leq 2 \left(\dfrac{q}{p|a|^2}\right)^{\frac{1}{q}} \limsup_{|z|\to\infty}m_z(\psi,\varphi).
$$
\end{thm}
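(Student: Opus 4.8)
The plan is to prove the two inequalities separately: the lower bound by testing $W_{\psi,\varphi}$ against normalized reproducing kernels, and the upper bound by comparing $W_{\psi,\varphi}$ with the compact operators $W_{\psi,\varphi_r}$ obtained by dilating the symbol, $\varphi_r(z):=r\varphi(z)$, $0<r<1$.

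\emph{Lower bound.} I would choose a sequence $(z_n)_n$ in $\C$ with $|z_n|\to\infty$ and $m_{z_n}(\psi,\varphi)\to\limsup_{|z|\to\infty}m_z(\psi,\varphi)$; since $0<|a|\le1$, also $|\varphi(z_n)|\to\infty$. As $p>1$, the norm-one sequence $(k_{\varphi(z_n)})_n$ converges weakly to $0$ in $\calF^p(\C)$ (as in the proof of Lemma \ref{lem-com1}), so $\|Kk_{\varphi(z_n)}\|_q\to0$ for every compact operator $K:\calF^p(\C)\to\calF^q(\C)$. Using $\|k_{\varphi(z_n)}\|_p=1$ and the left-hand inequality in \eqref{z-norm}, for each such $K$,
$$
\|W_{\psi,\varphi}-K\|\ge\liminf_{n\to\infty}\|(W_{\psi,\varphi}-K)k_{\varphi(z_n)}\|_q\ge\liminf_{n\to\infty}\|W_{\psi,\varphi}k_{\varphi(z_n)}\|_q\ge\liminf_{n\to\infty}m_{z_n}(\psi,\varphi).
$$
Since $m_{z_n}(\psi,\varphi)\to\limsup_{|z|\to\infty}m_z(\psi,\varphi)$, taking the infimum over $K$ gives $\|W_{\psi,\varphi}\|_e\ge\limsup_{|z|\to\infty}m_z(\psi,\varphi)$.

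\emph{Upper bound.} For $0<r<1$, the identity $m_z(\psi,\varphi_r)=m_z(\psi,\varphi)\,e^{(r^2-1)|\varphi(z)|^2/2}\le m(\psi,\varphi)\,e^{(r^2-1)|\varphi(z)|^2/2}$ together with $|\varphi(z)|\to\infty$ shows $\lim_{|z|\to\infty}m_z(\psi,\varphi_r)=0$, so by Theorem \ref{thm-bd1}(b) the operator $W_{\psi,\varphi_r}:\calF^p(\C)\to\calF^q(\C)$ is compact; hence $\|W_{\psi,\varphi}\|_e\le\|W_{\psi,\varphi}-W_{\psi,\varphi_r}\|$. Writing $D_rf(z):=f(rz)$, so that $W_{\psi,\varphi_r}f=\psi\cdot((D_rf)\circ\varphi)$, I would fix $R>0$ and split, for $\|f\|_p\le1$,
$$
\|(W_{\psi,\varphi}-W_{\psi,\varphi_r})f\|_q^q=\frac{q}{2\pi}\Big(\int_{|z|\le R}+\int_{|z|>R}\Big)m_z(\psi,\varphi)^q\,\big|(f-D_rf)(\varphi(z))\big|^q e^{-\frac{q|\varphi(z)|^2}{2}}\,dA(z).
$$
On the range $|z|>R$ I would estimate $m_z(\psi,\varphi)^q\le(\sup_{|z|>R}m_z(\psi,\varphi))^q$, change variables $w=\varphi(z)$, and use $\|f-D_rf\|_q\le(1+r^{-2/q})\|f\|_q\le(1+r^{-2/q})(q/p)^{1/q}$ — where $\|D_rf\|_q\le r^{-2/q}\|f\|_q$ comes from a change of variables and $r^{-2}\ge1$, and Lemma \ref{lem-pq} gives $\|f\|_q\le(q/p)^{1/q}\|f\|_p$ — to bound this piece by $(\sup_{|z|>R}m_z(\psi,\varphi))^q(1+r^{-2/q})^q\,q/(p|a|^2)$. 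On the range $|z|\le R$ the point $\varphi(z)$ stays in the fixed compact set $\varphi(\{|z|\le R\})$; since $\{f:\|f\|_p\le1\}$ is uniformly bounded on compact subsets of $\C$ by Lemma \ref{lem-Fp}(i), hence uniformly Lipschitz there, one gets $\sup_{\|f\|_p\le1}\sup_{|z|\le R}|(f-D_rf)(\varphi(z))|\le C_R(1-r)$, so this piece is at most $(C_R(1-r))^q\|\psi\|_q^q$, which tends to $0$ as $r\to1^-$. Combining, $\|W_{\psi,\varphi}\|_e^q\le(C_R(1-r)\|\psi\|_q)^q+(\sup_{|z|>R}m_z(\psi,\varphi))^q(1+r^{-2/q})^q\,q/(p|a|^2)$ for all $r\in(0,1)$ and $R>0$; letting $r\to1^-$ and then $R\to\infty$ yields $\|W_{\psi,\varphi}\|_e\le2(q/(p|a|^2))^{1/q}\limsup_{|z|\to\infty}m_z(\psi,\varphi)$.

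\emph{Main obstacle.} The delicate part is the upper bound, specifically the contribution from $|z|\le R$: one cannot simply truncate $W_{\psi,\varphi}$ because a cut-off at $\{|z|\le R\}$ destroys holomorphy, so the dilated symbols $\varphi_r$ must be used to supply genuine compact operators, and the crux is that $f-D_rf$ is uniformly small on compact subsets of $\C$ as $r\to1^-$ — precisely what the normal-family/equicontinuity argument provides. Obtaining the exact constant $2$ also requires keeping track of the factor $1+r^{-2/q}$, which tends to $2$ as $r\to1^-$.
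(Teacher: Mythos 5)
Your proposal is correct and follows essentially the same route as the paper: the lower bound via the normalized kernels $k_{\varphi(z_n)}$ (the paper phrases it as a contradiction, you argue directly, using the same weak convergence for $p>1$), and the upper bound via the compact operators $W_{\psi,\varphi_r}=W_{\psi,\varphi}\circ D_r$ with the same $|z|\le R$ / $|z|>R$ split and the same norm bound $\|I-D_r\|\le\bigl(1+r^{-2/q}\bigr)(q/p)^{1/q}$ producing the constant $2$. The only cosmetic difference is in the compact-set term, where the paper controls $(I-D_r)f$ via explicit Taylor-coefficient estimates while you use local boundedness and equicontinuity of the unit ball; both work.
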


\begin{proof}
It is clear, by \eqref{z-norm}, that $\displaystyle\limsup_{|z|\to\infty}m_z(\psi,\varphi)$ is finite.

\noindent\textbf{- Lower estimate}. We prove the lower estimate for $\|W_{\psi, \varphi}\|_e$ by contradiction.
Assume in contrary that
$$
\|W_{\psi,\varphi}\|_e < \limsup_{|z|\to\infty} m_z(\psi,\varphi).
$$
Then there are positive constants $A<B$ and a compact operator $T$ acting from $\mathcal F^p(\mathbb C)$ into $\mathcal F^q(\mathbb C)$ such that
$$
\|W_{\psi,\varphi} - T\| < A < B < \limsup_{|z|\to\infty} m_z(\psi,\varphi).
$$
We can find a sequence $(z_n)$ with $|z_n|\uparrow\infty$ so that
\begin{equation} \label{eq-1}
\lim_{n \to \infty} m_{z_n}(\psi,\varphi) = \limsup_{|z|\to\infty} m_z(\psi,\varphi)> B.
\end{equation}
On the other hand, using \eqref{z-norm}, for each $n\in N$, we have
\begin{align*}
\|W_{\psi,\varphi} - T\| &\geq \|W_{\psi,\varphi}k_{\varphi(z_n)} - T k_{\varphi(z_n)}\|_q \geq \|W_{\psi,\varphi}k_{\varphi(z_n)}\|_q - \|T k_{\varphi(z_n)}\|_q \\
&\geq m_{z_n}(\psi,\varphi) - \|T k_{\varphi(z_n)}\|_q.
\end{align*}
Since $a\neq 0$, $\varphi(z_n)\to\infty$ as $n \to \infty$,  by Lemma \ref{lem-com1}, $\|T k_{\varphi(z_n)}\|_q \to 0$ as $n \to \infty$.

From this and \eqref{eq-1}, we obtain
$$
A > \|W_{\psi,\varphi} - T\| \geq \lim_{n\to\infty}m_{z_n}(\psi,\varphi) > B,
$$
which is a contradiction.

\medskip

\noindent\textbf{- Upper estimate}. For each $k\in\N$, we consider the dilation operator $U_{r_k}f(z): = f(r_kz)$ with $r_k: =k/(k+1)$. Then by Corollary \ref{cor-co-1} and Theorem \ref{thm-bd1}, $U_{r_k}$ is a compact operator from $\calF^p(\C)$ into $\calF^q(\C)$ and 
$$
\|U_{r_k}\| \leq \left(\dfrac{q}{p r_k^2}\right)^{\frac{1}{q}},\ \forall k\in\N.
$$
Take and fix a number $R > 0$. For every $k\in\N$, we have
\begin{align*}
\|W_{\psi, \varphi}\|_e &\leq \|W_{\psi, \varphi} - W_{\psi, \varphi} \circ U_{r_k}\| = \sup_{\|f\|_p \leq 1} \|W_{\psi, \varphi} \circ (I - U_{r_k})f\|_q \\
& = \sup_{\|f\|_p \leq 1} \left( \frac{q}{2\pi}\int_\C |W_{\psi, \varphi} \circ (I - U_{r_k})f(z)|^q e^{-\frac{q|z|^2}{2}}dA(z)\right)^{\frac{1}{q}} \\
& \leq \sup_{\|f\|_p \leq 1} \left( \frac{q}{2\pi}\int_{|z| \leq R} |W_{\psi, \varphi} \circ (I - U_{r_k})f(z)|^q e^{-\frac{q|z|^2}{2}}dA(z)\right)^{\frac{1}{q}} \\
& + \sup_{\|f\|_p \leq 1} \left( \frac{q}{2\pi}\int_{|z| > R} |W_{\psi, \varphi} \circ (I - U_{r_k})f(z)|^q e^{-\frac{q|z|^2}{2}}dA(z)\right)^{\frac{1}{q}} \\
&=\calI(R,k) + \calJ(R,k),
\end{align*}
where $I$ is the inclusion from $\mathcal F^p(\C)$ into $\mathcal F^q(\C)$ with $\|I\| \leq \left( \frac{q}{p} \right)^{\frac{1}{q}}$.

On one hand, we observe that
\begin{align*}
\calJ(R,k) & \leq \sup_{|z|>R}m_z(\psi,\varphi) \sup_{\|f\|_p \leq 1}\left( \dfrac{q}{2\pi}\int_{|z| > R} |(I - U_{r_k})f(\varphi(z))|^q e^{-\frac{q|\varphi(z)|^2}{2}}dA(z)\right)^{\frac{1}{q}}\\
& \leq |a|^{-\frac{2}{q}} \sup_{|z|>R}m_z(\psi,\varphi) \sup_{\|f\|_p \leq 1} \|(I - U_{r_k})f\|_q \\
& = |a|^{-\frac{2}{q}} \|I - U_{r_k}\| \sup_{|z|>R}m_z(\psi,\varphi)  \\
&\leq |a|^{-\frac{2}{q}} \left(\left(\dfrac{q}{p}\right)^{\frac{1}{q}}+\left(\dfrac{q}{pr_k^2}\right)^{\frac{1}{q}}\right) \sup_{|z|>R}m_z(\psi,\varphi).
\end{align*}
On the other hand, we have
\begin{align*}
\calI(R,k) & \leq \sup_{|z| \leq R}m_z(\psi,\varphi) \sup_{\|f\|_p \leq 1}\left( \dfrac{q}{2\pi}\int_{|z| \leq R} |(I - U_{r_k})f(\varphi(z))|^q e^{-\frac{q|\varphi(z)|^2}{2}}dA(z)\right)^{\frac{1}{q}}\\
& \leq \left( \dfrac{q}{2\pi}\int_{|z| \leq R} e^{-\frac{q|\varphi(z)|^2}{2}}dA(z)\right)^{\frac{1}{q}} \sup_{|z| \leq R}m_z(\psi,\varphi) \sup_{\|f\|_p \leq 1} \sup_{|z| \leq R} \big|(I - U_{r_k})f(\varphi(z))\big| \\
& \leq |a|^{-\frac{2}{q}}\|1\|_q m(\psi,\varphi) \sup_{\|f\|_{\infty} \leq 1} \sup_{|z| \leq R}\big|(I - U_{r_k})f(\varphi(z))\big|,
\end{align*}
where the last inequality is based on the fact that $\|f\|_{\infty} \leq \|f\|_p$ for every $f\in\calF^p(\C)$.

For each $f(z)=\dsum_{j=0}^{\infty} a_j z^j $ with $\|f\|_{\infty} \leq 1$, we have
$$
|a_j| = \dfrac{|f^{(j)}(0)|}{j!} \leq \dfrac{1}{\rho^j} \max_{|\zeta| = \rho} |f(\zeta)| \leq \frac{e^{\frac{\rho^2}{2}}}{\rho^j},\ \forall j\geq1, \rho >0,
$$
which gives
$$
|a_j| \leq \inf_{\rho > 0} \dfrac{e^{\frac{\rho^2}{2}}}{\rho^j} = \left(\frac{e}{j}\right)^{\frac{j}{2}},\ \text{for all}\ j \geq1.
$$
Putting $ \displaystyle R_{\varphi}: = \max_{|z| \leq R} |\varphi(z)|$, we obtain
\begin{align*}
\calI(R,k) & \leq |a|^{-\frac{2}{q}}\|1\|_q m(\psi,\varphi) \sup_{\|f\|_{\infty} \leq 1} \sup_{|z| \leq R} \big|(I - U_{r_k})f(\varphi(z))\big| \\
& \leq |a|^{-\frac{2}{q}}\|1\|_q m(\psi,\varphi) \sup_{\|f\|_{\infty} \leq 1} \sup_{|z| \leq R_{\varphi}}\big|(I - U_{r_k})f(z)\big| \\
& \leq |a|^{-\frac{2}{q}}\|1\|_q m(\psi,\varphi) \sup_{\|f\|_{\infty} \leq 1} \sup_{|z| \leq R_{\varphi}} \sum_{j=1}^{\infty} |a_j| \left(1 - \frac{k^j}{(k+1)^j} \right) |z|^j \\
& \leq \dfrac{|a|^{-\frac{2}{q}}\|1\|_q}{k+1} m(\psi,\varphi) \sum_{j=1}^{\infty} j R_{\varphi}^j \left(\frac{e}{j}\right)^{\frac{j}{2}}.
\end{align*}
Consequently,
\begin{align*}
\|W_{\psi, \varphi}\|_e &\leq \limsup_{k \to \infty} \|W_{\psi, \varphi} - W_{\psi, \varphi} \circ U_{r_k}\| \leq \limsup_{k \to \infty}\calI(R,k) + \limsup_{k \to \infty}\calJ(R,k) \\
& \leq 2 \left(\dfrac{q}{p|a|^2}\right)^{\frac{1}{q}}\sup_{|z|>R}m_z(\psi,\varphi),
\end{align*}
from which the upper estimate of $\|W_{\psi,\varphi}\|_e$ follows by letting $R \to \infty$.
\end{proof}

From this result, we have the following simplified estimates for the essential norm of a bounded weighted composition operator $W_{\psi,\varphi}$ in case $|a|=1$.

\begin{cor}\label{ess-noncpt}
If $|a|=1$, then
$$
|\psi(0)|e^{\frac{|b|^2}{2}} \leq \|W_{\psi,\varphi}\|_e \leq 2\left(\dfrac{q}{p}\right)^{1/q} |\psi(0)|e^{\frac{|b|^2}{2}}.
$$
\end{cor}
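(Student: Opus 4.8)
The plan is to deduce Corollary \ref{ess-noncpt} directly from Theorem \ref{thm-essnorm} by evaluating $\limsup_{|z|\to\infty} m_z(\psi,\varphi)$ explicitly in the case $|a|=1$. First I would expand, using $\varphi(z)=az+b$ and $|a|=1$,
$$
|\varphi(z)|^2-|z|^2 = |az+b|^2-|z|^2 = 2\,\re(\overline{b}az)+|b|^2,
$$
so that
$$
m_z(\psi,\varphi) = |\psi(z)|\,e^{\re(\overline{b}az)}\,e^{\frac{|b|^2}{2}} = \big|\psi(z)e^{\overline{b}az}\big|\,e^{\frac{|b|^2}{2}},\qquad z\in\C.
$$

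The key step is then a Liouville argument. Since $W_{\psi,\varphi}$ is bounded, Proposition \ref{prop-nec} gives $m(\psi,\varphi)=\sup_{z}m_z(\psi,\varphi)<\infty$, hence the entire function $h(z):=\psi(z)e^{\overline{b}az}$ satisfies $|h(z)|\le m(\psi,\varphi)\,e^{-|b|^2/2}$ for all $z\in\C$. By Liouville's theorem $h$ is constant, so $h\equiv h(0)=\psi(0)$, and consequently $m_z(\psi,\varphi)=|\psi(0)|e^{|b|^2/2}$ for \emph{every} $z$. In particular
$$
\limsup_{|z|\to\infty} m_z(\psi,\varphi) = |\psi(0)|\,e^{\frac{|b|^2}{2}}
$$
(and $\psi(0)\neq 0$, since otherwise $h\equiv 0$ would force $\psi\equiv 0$, contradicting the standing assumption).

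Finally I would substitute this value, together with $|a|=1$, into the two-sided estimate of Theorem \ref{thm-essnorm}, which yields at once
$$
|\psi(0)|e^{\frac{|b|^2}{2}} \le \|W_{\psi,\varphi}\|_e \le 2\left(\frac{q}{p}\right)^{1/q}|\psi(0)|e^{\frac{|b|^2}{2}}.
$$
I do not anticipate a genuine obstacle: the content is entirely in the algebraic identity for $|\varphi(z)|^2-|z|^2$ (where the hypothesis $|a|=1$ is used essentially, since for $|a|<1$ the exponent $(|a|^2-1)|z|^2$ dominates and forces the limsup to be $0$) and in recognizing that boundedness of $W_{\psi,\varphi}$ makes $\psi(z)e^{\overline{b}az}$ a bounded entire function so that Liouville applies; the whole argument should be a few lines.
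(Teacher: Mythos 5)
Your proposal is correct and is essentially the argument the paper intends: the corollary is stated as an immediate consequence of Theorem \ref{thm-essnorm}, and the key fact you establish via Liouville's theorem --- that boundedness with $|a|=1$ forces $\psi(z)e^{\overline{b}az}$ to be the constant $\psi(0)$, so that $m_z(\psi,\varphi)\equiv|\psi(0)|e^{|b|^2/2}$ --- is exactly the identity the authors invoke explicitly later (in the proof of Theorem \ref{thm-path-wc}). Substituting this constant value and $|a|=1$ into the two-sided estimate of Theorem \ref{thm-essnorm} gives the stated bounds, as you say.
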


\begin{rem}
Ueki \cite{U-07} showed that the essential norm of $W_{\psi, \varphi}$ on Hilbert space $\mathcal F^2(\C)$ is equivalent to $\displaystyle \limsup_{|z|\to \infty} B_{\varphi}(|\psi|^2)(z)$, where $B_{\varphi}(|\psi|^2)(z)$ is the integral transform
$$
B_{\varphi}(|\psi|^2)(z) = \int_{\C} |\psi(\zeta)|^2 \left|e^{\left\langle\varphi(\zeta),z\right\rangle}\right|^2 e^{-|\zeta|^2} e^{-|z|^2}dA(\zeta).
$$
However, this result is difficult to use, even for composition operators, that is, when $\psi$ is a constant function.

Our Theorem \ref{thm-essnorm} is simpler and more effective for essential norm of $W_{\psi, \varphi}$ acting from smaller general Fock spaces $\mathcal F^p(\C)$ into larger ones $\mathcal F^q(\C)$.
Moreover, Theorem \ref{thm-essnorm} also give an answer response to T.~Le's question in \cite[Remark 2.5]{T-14}.
\end{rem}


\section{Topological structure}
One of the recent main subjects in the study of (weighted) composition operators is related to the topological structure of the space of such operators endowed with the operator norm topology. 

In a general setting, let $X$ and $Y$ be two spaces of holomorphic functions on a domain $G$. For every bounded weighted composition operator $W_{\psi, \varphi}: X \to Y$, we can easily show that $W_{\psi, \varphi}$ and the zero operator $0$ belong to the same path connected component in the space of weighted composition operators acting from $X$ into $Y$ via the path $T_t: = W_{t\psi, \varphi}$ for $t \in [0,1]$. Then researchers study the topological structure for the space of only nonzero weighted composition operators from $X$ into $Y$. We write $\calC(X,Y)$ for the space of composition operators and $\calC_w(X,Y)$ for the space of nonzero weighted composition operators acting from $X$ into $Y$ under the operator norm topology.
Acording to \cite{SS-90}, the important problems in this topic were raised as follows:
\begin{itemize}
\item[(i)] Characterize the components of $\calC(X,Y)$ and $\calC_w(X,Y)$.
\item[(ii)] Characterize isolated points $\calC(X,Y)$ and $\calC_w(X,Y)$.
\item[(iii)] Characterize compact differences of (weighted) composition operators. 
\end{itemize} 
These questions have been intensively investigated on Bergman spaces \cite{M-05}, on Hardy spaces \cite{GGNS-08, IO-14}, on the space $H^{\infty}$ of bounded holomorphic functions \cite{IIO-12, MOZ-01}, on weighted Banach spaces of holomorphic functions with sup-norm \cite{BLW-09, M-08}, on Hilbert Fock space $\calF^2(\C^n)$ \cite{D-14}.

In this section we investigate the topological structure for both spaces $\calC \big(\calF^p(\C),\calF^q(\C)\big)$ and $\calC_w\big(\calF^p(\C),\calF^q(\C)\big)$ with $p, q \in (0, \infty)$ and give complete answers to all the mentioned-above questions.

\subsection{Compact differences}

In view of Theorem \ref{thm-bd2} we will study the compactness of the difference of two bounded weighted composition operators acting from a smaller Fock space $\calF^p(\C)$ into another larger one $\calF^q(\C)$.

\begin{thm}\label{thm-comdif-wco}
Let $0< p \leq q < \infty$ and $W_{\psi_1,\varphi_1}$ and $W_{\psi_2,\varphi_2}$ be two weighted composition operators in $\calC_w\big(\calF^p(\C),\calF^q(\C)\big)$ induced respectively by nonzero entire functions $\psi_1, \psi_2 \in \calF^q(\C)$ and affine functions $\varphi_1(z)=a_1z+b_1, \varphi_2(z)=a_2z+b_2$ with $|a_1|\le1, |a_2|\le1$. Then the difference $W_{\psi_1, \varphi_1} - W_{\psi_2, \varphi_2}: \calF^p(\C) \to \calF^q(\C)$ is compact if and only if either of the following conditions is satisfied:
\begin{itemize}
\item[(i)] Both $W_{\psi_1,\varphi_1}$ and $W_{\psi_2,\varphi_2}$ are compact operators from $\calF^p(\C)$ into $\calF^q(\C)$.
\item[(ii)] $\varphi_1 = \varphi_2 := \varphi$ and  $\dlim_{|z| \to \infty} m_z(\psi_1-\psi_2,\varphi)=0.$
\end{itemize}
\end{thm}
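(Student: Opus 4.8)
The plan is to treat the two implications separately; the forward (``only if'') implication carries the content.

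\emph{Sufficiency.} Write $D := W_{\psi_1,\varphi_1} - W_{\psi_2,\varphi_2}$. If (i) holds, $D$ is a difference of compact operators, hence compact. If (ii) holds, then $D = W_{\psi_1-\psi_2,\varphi}$; when $\psi_1 = \psi_2$ this is the zero operator, while when $\psi_1 \neq \psi_2$ the function $\psi_1 - \psi_2$ is a nonzero element of $\calF^q(\C)$ with $\dlim_{|z|\to\infty} m_z(\psi_1-\psi_2,\varphi) = 0$, so Theorem \ref{thm-bd1}(b) gives compactness of $D$ (and if $\varphi$ is constant both operators already have rank one, so in fact (i) holds too).

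\emph{Necessity.} Assume $D$ is compact. If $a_1 = 0$, then $W_{\psi_1,\varphi_1}$ is compact by Corollary \ref{cor-0}, whence $W_{\psi_2,\varphi_2} = W_{\psi_1,\varphi_1} - D$ is compact and (i) holds; symmetrically if $a_2 = 0$. So we may assume $0 < |a_1|, |a_2| \leq 1$, and that at least one operator, say $W_{\psi_1,\varphi_1}$ (the roles are symmetric), is non-compact, else (i) holds. By Theorem \ref{thm-bd1}(b) there are $\delta > 0$ and $(z_n)$ with $|z_n| \to \infty$ and $m_{z_n}(\psi_1,\varphi_1) \geq \delta$ for all large $n$. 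Since $a_i \neq 0$, the points $w_n^{(i)} := \varphi_i(z_n)$ satisfy $|w_n^{(i)}| \to \infty$, so the unit vectors $k_{w_n^{(i)}}$ of $\calF^p(\C)$ tend to $0$ in $\calO(\C)$; as $D$ is continuous on $\calO(\C)$ (being a combination of multipliers and composition operators), maps $\calF^p(\C)$ into $\calF^q(\C)$, and is compact, Lemma \ref{lem-com} gives $\|D k_{w_n^{(i)}}\|_q \to 0$ for $i=1,2$. The computational heart is the identity, obtained from $2\,\re(\bar u v) = |u|^2 + |v|^2 - |u-v|^2$,
$$
\big| W_{\psi_j,\varphi_j} k_{\varphi_i(z)}(z) \big|\, e^{-|z|^2/2} = m_z(\psi_j,\varphi_j)\, e^{-|\varphi_i(z)-\varphi_j(z)|^2/2}, \qquad i,j \in \{1,2\}.
$$
Putting $z = z_n$, $w = w_n^{(i)}$ and using $|D k_{w_n^{(i)}}(z_n)|\, e^{-|z_n|^2/2} \leq \|D k_{w_n^{(i)}}\|_q$ from Lemma \ref{lem-Fp}(i), together with $\big||A|-|B|\big|\le|A-B|$, we obtain, writing $s_n := e^{-|\varphi_1(z_n)-\varphi_2(z_n)|^2/2}$,
$$
\big| m_{z_n}(\psi_1,\varphi_1) - s_n\, m_{z_n}(\psi_2,\varphi_2) \big| \to 0 \quad\text{and}\quad \big| m_{z_n}(\psi_2,\varphi_2) - s_n\, m_{z_n}(\psi_1,\varphi_1) \big| \to 0.
$$

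From the first relation, $\delta \leq m_{z_n}(\psi_1,\varphi_1) \leq s_n\, m(\psi_2,\varphi_2) + o(1)$, and $m(\psi_2,\varphi_2) < \infty$ by Proposition \ref{prop-nec}; hence $\big(|\varphi_1(z_n)-\varphi_2(z_n)|\big)_n$ is bounded. Since $\varphi_1(z_n)-\varphi_2(z_n) = (a_1-a_2)z_n + (b_1-b_2)$ with $|z_n|\to\infty$, this forces $a_1 = a_2 =: a$, so that $\varphi_1(z_n)-\varphi_2(z_n) \equiv b_1-b_2 =: b_0$ and $s_n \equiv t := e^{-|b_0|^2/2} \in (0,1]$. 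The two relations then give $m_{z_n}(\psi_1,\varphi_1) \leq t\, m_{z_n}(\psi_2,\varphi_2) + o(1)$ and $m_{z_n}(\psi_2,\varphi_2) \leq t\, m_{z_n}(\psi_1,\varphi_1) + o(1)$, and combining yields $(1-t^2)\, m_{z_n}(\psi_1,\varphi_1) \leq o(1)$. If $b_0 \neq 0$ then $t < 1$, so $\delta(1-t^2) \leq o(1)$, a contradiction. Hence $b_0 = 0$, i.e. $\varphi_1 = \varphi_2 =: \varphi$ with $\varphi(z) = az+b$, $0 < |a|\le1$; then $D = W_{\psi_1-\psi_2,\varphi}$ is compact, so Theorem \ref{thm-bd1}(b) gives $\dlim_{|z|\to\infty} m_z(\psi_1-\psi_2,\varphi) = 0$, that is, (ii) holds.

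The main obstacle is the necessity direction, and within it the coupling step: probing $D$ simultaneously with the two kernel functions $k_{\varphi_1(z_n)}$ and $k_{\varphi_2(z_n)}$ produces a pair of inequalities whose only consistent resolution — once $a_1=a_2$ has been forced by the growth of $|z_n|$ — is $b_1=b_2$. Choosing the correct test points and establishing the polarization identity for $\big|W_{\psi_j,\varphi_j}k_{\varphi_i(z)}(z)\big|$ is the crux; everything else is a routine application of Lemma \ref{lem-com}, Proposition \ref{prop-nec}, Corollary \ref{cor-0}, and Theorem \ref{thm-bd1}.
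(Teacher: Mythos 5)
Your proof is correct and follows essentially the same route as the paper: both test the difference against the kernel functions $k_{\varphi_1(z_n)}$ and $k_{\varphi_2(z_n)}$, use Lemma \ref{lem-com} to kill $\|Dk_{\varphi_i(z_n)}\|_q$, and exploit the identity producing the factor $e^{-|\varphi_1(z)-\varphi_2(z)|^2/2}$ to force first $a_1=a_2$ and then $b_1=b_2$. The only (harmless) difference is organizational: you run both coupled inequalities along a single sequence realizing non-compactness of one operator, whereas the paper splits into the cases $a_1\neq a_2$ and $a_1=a_2$ and obtains the symmetric inequality $c_2\le c_1e^{-|b_1-b_2|^2/2}$ by interchanging the roles of $\varphi_1$ and $\varphi_2$ along a second sequence.
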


\begin{proof} Since $W_{\psi_1,\varphi}-W_{\psi_2,\varphi}=W_{\psi_1-\psi_2,\varphi}$, the sufficiency follows from Theorem \ref{thm-bd1}.

For the necessity, suppose that the difference $W_{\psi_1,\varphi_1} - W_{\psi_2,\varphi_2}: \calF^p(\C) \to \calF^q(\C)$ is compact. Then both $W_{\psi_1,\varphi_1}$ and $W_{\psi_2,\varphi_2}$ must be either compact or non-compact on $\calF^p(\C)$ simultaneously.

Consider the case when both $W_{\psi_1,\varphi_1}$ and $W_{\psi_2,\varphi_2}$ are non-compact. From Theorem \ref{thm-bd1} it follows that
$$
c_k=\limsup_{|z|\to\infty} m_z(\psi_k,\varphi_k)>0\ (k=1,2).
$$
Then for say $c_1$, there exists a sequence $(z_n)$ with $|z_n| \uparrow \infty$ as $n \to \infty$, such that
$$
\lim_{n\to\infty} m_{z_n}(\psi_1,\varphi_1) = \limsup_{|z|\to\infty} m_z(\psi_1,\varphi_1)=c_1.
$$
By Lemma \ref{lem-Fp}[(i)], for all $w,z\in\C$,
\begin{eqnarray*}
\|W_{\psi_1, \varphi_1}k_{w} - W_{\psi_2, \varphi_2}k_{w}\|_q%
&\ge& \left|W_{\psi_1, \varphi_1}k_{w}(z)- W_{\psi_2, \varphi_2}k_{w}(z)\right| e^{-\frac{|z|^2}{2}} \\
&=& \left|\psi_1(z)e^{\overline{w}\varphi_1(z)-\frac{|w|^2+|z|^2}{2}}-\psi_2(z) e^{\overline{w}\varphi_2(z)-\frac{|w|^2+|z|^2}{2}}\right|.
\end{eqnarray*}
In particular, with $w=\varphi_1(z)$, the last inequality gives
\begin{align}\label{ine}
& \|W_{\psi_1, \varphi_1}k_{\varphi_1(z)} - W_{\psi_2, \varphi_2}k_{\varphi_1(z)}\|_q \\
\ge& \left|\big|\psi_1(z)e^{\frac{|\varphi_1(z)|^2-|z|^2}{2}}\big| -\big|\psi_2(z) e^{\overline{\varphi_1(z)}\varphi_2(z)-\frac{|\varphi_1(z)|^2+|z|^2}{2}}\big|\right| \notag \\
=& \left| m_z(\psi_1, \varphi_1) - m_z(\psi_2, \varphi_2) e^{-\frac{ |\varphi_1(z)|^2+|\varphi_2(z)|^2 - 2\re \left( \overline{\varphi_1(z)}\varphi_2(z)\right)}{2}} \right| \notag \\
=& \left| m_z(\psi_1, \varphi_1) - m_z(\psi_2, \varphi_2) e^{-\frac{|\varphi_1(z) - \varphi_2(z)|^2}{2}} \right|
, \forall z \in \mathbb C. \notag
\end{align}

There are two cases for complex numbers $a_1$ and $a_2$.

\medskip
\noindent\textbf{- Case 1: $a_1\ne a_2$}. In this case, 
$$
\lim_{n\to\infty} e^{-\frac{|\varphi_1(z_n) - \varphi_2(z_n)|^2}{2}}=0.
$$
From this, taking into account the inequality $m_{z_n}(\psi_2,\varphi_2) < m(\psi_2,\varphi_2)$, for every $n \in \N$, it follows that
$$
\lim_{n \to \infty} m_{z_n}(\psi_2, \varphi_2)e^{-\frac{|\varphi_1(z_n) - \varphi_2(z_n)|^2}{2}} =0.
$$
Obviously, $W_{\psi_1,\varphi_1} - W_{\psi_2,\varphi_2}$ is a linear continuous operator on $\mathcal O(\C)$. Then, by Lemma \ref{lem-com},
$$
\lim_{n \to \infty} \|W_{\psi_1,\varphi_1}k_{\varphi_1(z_n)} - W_{\psi_2,\varphi_2}k_{\varphi_1(z_n)}\|_q = 0.
$$
Consequently, by \eqref{ine}, we get
\begin{align*}
c_1 & = \lim_{n\to\infty} \left( m_{z_n}(\psi_1,\varphi_1) - m_{z_n}(\psi_2,\varphi_2) e^{-\frac{|\varphi_1(z_n) - \varphi_2(z_n)|^2}{2}}\right)\\
&\le \lim_{n\to\infty} \left| m_{z_n}(\psi_1,\varphi_1) - m_{z_n}(\psi_2,\varphi_2) e^{-\frac{|\varphi_1(z_n) - \varphi_2(z_n)|^2}{2}}\right| \\
& \le \lim_{n \to \infty} \|W_{\psi_1,\varphi_1}k_{\varphi_1(z_n)} - W_{\psi_2,\varphi_2}k_{\varphi_1(z_n)}\|_q = 0,
\end{align*}
which is impossible.

\medskip
\noindent\textbf{- Case 2: $a_1=a_2=a$}. In this case, \eqref{ine} gives
\begin{align*}
& \|W_{\psi_1, \varphi_1}k_{\varphi_1(z)} - W_{\psi_2, \varphi_2}k_{\varphi_1(z)}\|_q \\
\geq & \left| m_z(\psi_1, \varphi_1) - m_z(\psi_2, \varphi_2) e^{-\frac{|b_1 - b_2|^2}{2}} \right|,\ \forall z\in\C.
\end{align*}
Moreover,
$$
\liminf_{n \to \infty} m_{z_n}(\psi_2,\varphi_2) \leq \limsup_{n \to \infty} m_{z_n}(\psi_2,\varphi_2) \leq \limsup_{|z| \to \infty} m_{z}(\psi_2,\varphi_2) = c_2.
$$
Therefore,
\begin{align*}
& c_1 - c_2 e^{-\frac{|b_1 - b_2|^2}{2}} \\
\le & \limsup_{n\to\infty} \left( m_{z_n}(\psi_1,\varphi_1) - m_{z_n}(\psi_2,\varphi_2) e^{-\frac{|b_1 - b_2|^2}{2}} \right) \\
\le & \lim_{n\to\infty} \|W_{\psi_1,\varphi_1}k_{\varphi_1(z_n)} - W_{\psi_2,\varphi_2}k_{\varphi_1(z_n)}\|_q = 0.
\end{align*}
Hence,
$$
c_1 \leq c_2 e^{-\frac{|b_1 - b_2|^2}{2}}.
$$

Interchanging the role of $\varphi_1$ and $\varphi_2$ in the proofs above, we also obtain
$$
c_2 \leq c_1 e^{-\frac{|b_1 - b_2|^2}{2}}.
$$

Combining the last two inequalities yields
$$
c_2 \le c_2 e^{-|b_1 - b_2|^2} \le c_2,
$$
which gives $b_1=b_2$.

Thus $\varphi_1(z)=\varphi_2(z) = \varphi(z) = az+b$, which gives $W_{\psi_1,\varphi_1} - W_{\psi_2,\varphi_2} = W_{\psi_1-\psi_2,\varphi}$. By Theorem \ref{thm-bd1}, $\dlim_{|z|\to\infty} m_z(\psi_1-\psi_2,\varphi)=0$.
\end{proof}
From this theorem we immediately get the following result for compact differences of two composition operators.
\begin{cor}\label{cor-comdif-co}
Let $0< p \leq q < \infty$. Then the difference of two distinct composition operators acting from $\calF^p(\C)$ into $\calF^q(\C)$ is compact if and only if both composition operators are compact.
\end{cor}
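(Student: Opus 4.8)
The plan is to obtain this as a direct specialization of Theorem~\ref{thm-comdif-wco} to the case $\psi_1=\psi_2=1$. First I would record that a composition operator $C_\varphi$ is precisely the weighted composition operator $W_{1,\varphi}$ with the nonzero symbol $1\in\calF^q(\C)$, and that if $C_{\varphi_1},C_{\varphi_2}$ act boundedly from $\calF^p(\C)$ into $\calF^q(\C)$, then Proposition~\ref{prop-nec} forces $\varphi_j(z)=a_jz+b_j$ with $|a_j|\le1$ for $j=1,2$. Thus both operators lie within the scope of Theorem~\ref{thm-comdif-wco}, and $C_{\varphi_1}-C_{\varphi_2}=W_{1,\varphi_1}-W_{1,\varphi_2}$.

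For the sufficiency, if $C_{\varphi_1}$ and $C_{\varphi_2}$ are both compact, then their difference is compact since $\calK\big(\calF^p(\C),\calF^q(\C)\big)$ is a linear subspace of the bounded operators. This direction needs no further argument.

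For the necessity, I would argue as follows. Suppose $C_{\varphi_1}-C_{\varphi_2}\colon\calF^p(\C)\to\calF^q(\C)$ is compact while $C_{\varphi_1}\neq C_{\varphi_2}$. Applying $C_\varphi$ to the coordinate function $z\mapsto z$ (which belongs to every Fock space) yields $\varphi$, so $C_{\varphi_1}\neq C_{\varphi_2}$ forces $\varphi_1\neq\varphi_2$. Now invoke Theorem~\ref{thm-comdif-wco} with $\psi_1=\psi_2=1$: exactly one of its two alternatives must hold. Alternative (ii) requires $\varphi_1=\varphi_2$ (its second clause, $\lim_{|z|\to\infty}m_z(\psi_1-\psi_2,\varphi)=\lim_{|z|\to\infty}m_z(0,\varphi)=0$, being automatic here), which contradicts the distinctness just established. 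Hence alternative (i) must hold, i.e.\ both $C_{\varphi_1}$ and $C_{\varphi_2}$ are compact operators from $\calF^p(\C)$ into $\calF^q(\C)$.

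I do not expect any genuine obstacle here: all the substance is already carried by Theorem~\ref{thm-comdif-wco}, and the corollary is merely the observation that, for composition operators, the ``equal symbol'' alternative~(ii) degenerates into equality of the two operators themselves. The only point I would state with a little care is the elementary fact that $C_{\varphi_1}=C_{\varphi_2}$ if and only if $\varphi_1=\varphi_2$, which legitimizes reading ``two distinct composition operators'' as ``$\varphi_1\neq\varphi_2$''.
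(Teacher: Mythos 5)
Your proposal is correct and is exactly the argument the paper intends when it says the corollary follows ``immediately'' from Theorem~\ref{thm-comdif-wco}: specialize to $\psi_1=\psi_2=1$, observe that alternative~(ii) forces $\varphi_1=\varphi_2$ and hence $C_{\varphi_1}=C_{\varphi_2}$, so distinctness leaves only alternative~(i). Your added remarks (that boundedness forces the symbols to be affine via Proposition~\ref{prop-nec}, and that $C_{\varphi_1}=C_{\varphi_2}$ iff $\varphi_1=\varphi_2$ by evaluating on the coordinate function) are exactly the small details the paper leaves implicit.
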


\subsection{The space $\mathcal C \big( \mathcal F^p(\C), \mathcal F^q(\C) \big)$} \label{topo-c}
In this subsection we give a complete description of path connected and connected components and isolated points of the space $\calC \big(\calF^p(\C),\calF^q(\C)\big)$.

\begin{prop} \label{prop-path-c}
Let $p, q \in (0, \infty)$ and $C_{\varphi}$ be a compact composition operator from $\calF^p(\C)$ into $\calF^q(\C)$ induced by entire function $\varphi(z)=az+b$ with $|a| < 1$. Then $C_{\varphi}$ and $C_{\varphi(0)}$ belong to the same path connected component of $\calC \big(\calF^p(\C),\calF^q(\C)\big)$.
\end{prop}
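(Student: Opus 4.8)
The plan is to join $C_\varphi$ to $C_{\varphi(0)}$ (the rank-one composition operator with constant symbol $b=\varphi(0)$) by an explicit norm-continuous path of composition operators. I would set
$$
\varphi_t(z):=(1-t)az+b,\qquad t\in[0,1],
$$
so that $\varphi_0=\varphi$ and $\varphi_1\equiv b=\varphi(0)$. Each $\varphi_t$ is affine with leading coefficient $(1-t)a$ of modulus $|(1-t)a|\le|a|<1$; hence by Corollary \ref{cor-co-1} (when $p\le q$) or Corollary \ref{cor-co-2} (when $q<p$) every $C_{\varphi_t}$ is a compact — in particular bounded — composition operator from $\calF^p(\C)$ into $\calF^q(\C)$, i.e. $C_{\varphi_t}\in\calC\big(\calF^p(\C),\calF^q(\C)\big)$. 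It then remains only to show that $t\mapsto C_{\varphi_t}$ is continuous in the operator norm.

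First I would estimate the difference pointwise. Fix $s,t\in[0,1]$ and $f\in\calF^p(\C)$ with $\|f\|_p\le1$. Since $\varphi_s(z)-\varphi_t(z)=(t-s)az$, the fundamental theorem of calculus along the segment from $\varphi_t(z)$ to $\varphi_s(z)$ gives
$$
f(\varphi_s(z))-f(\varphi_t(z))=(t-s)az\int_0^1 f'\big(\theta_r az+b\big)\,dr,\qquad z\in\C,
$$
where $\theta_r:=(1-t)(1-r)+(1-s)r$ is a convex combination of $1-t$ and $1-s$, hence $\theta_r\in[0,1]$. Because $\theta_r\le1$ we have $|\theta_r az+b|\le|a||z|+|b|$, so Lemma \ref{lem-Fp}[(ii)] yields the uniform (in $r$) bound
$$
\big|f(\varphi_s(z))-f(\varphi_t(z))\big|\le|t-s|\,|a|\,|z|\,e^2\big(1+|a||z|+|b|\big)e^{\frac{(|a||z|+|b|)^2}{2}},\qquad z\in\C.
$$

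Then I would integrate. Raising the last inequality to the $q$-th power and integrating against $\frac{q}{2\pi}e^{-q|z|^2/2}\,dA(z)$ gives
$$
\|C_{\varphi_s}f-C_{\varphi_t}f\|_q^q\le|t-s|^q\,C,
$$
where $C$ is the finite constant
$$
C:=\dfrac{q}{2\pi}\int_\C\Big(|a||z|\,e^2\big(1+|a||z|+|b|\big)\Big)^q e^{\frac{q}{2}\left[(|a|^2-1)|z|^2+2|a||b||z|+|b|^2\right]}\,dA(z),
$$
which is finite precisely because $|a|<1$ forces the exponent to tend to $-\infty$ quadratically in $|z|$. Since $C$ does not depend on $f$, taking the supremum over $\|f\|_p\le1$ gives $\|C_{\varphi_s}-C_{\varphi_t}\|\le C^{1/q}|t-s|$, so $t\mapsto C_{\varphi_t}$ is Lipschitz, hence continuous, and $C_\varphi=C_{\varphi_0}$ and $C_{\varphi(0)}=C_{\varphi_1}$ lie in the same path connected component of $\calC\big(\calF^p(\C),\calF^q(\C)\big)$. (For $0<q<1$ the same pointwise bound, read with the quasi-norm $\|W\|=\sup_{\|f\|_p\le1}\|Wf\|_q$, gives continuity in exactly the same way.) The only step needing genuine care is the uniform-in-$f$ control of $\|C_{\varphi_s}f-C_{\varphi_t}f\|_q$, and this is where $|a|<1$ is used essentially — it is what both keeps every $C_{\varphi_t}$ bounded and makes the Gaussian integral defining $C$ converge; the rest is a routine use of Lemma \ref{lem-Fp} and the fundamental theorem of calculus.
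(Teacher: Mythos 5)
Your proof is correct, and while the path you use is the same one-parameter family of symbols as in the paper (your $\varphi_t(z)=(1-t)az+b$ is the paper's $\varphi_s(z)=\varphi(sz)=asz+b$ under $s=1-t$), your continuity argument is genuinely different and cleaner. The paper writes $f(\varphi(sz))-f(\varphi(s_0z))=\int_{s_0}^{s}z\,(C_\varphi f)'(tz)\,dt$ and applies Lemma \ref{lem-Fp}(ii) to the function $C_\varphi f\in\calF^q(\C)$ at the point $tz$; the resulting factor $e^{q|tz|^2/2}e^{-q|z|^2/2}=e^{-q(1-t^2)|z|^2/2}$ degenerates as $t\to1$, which forces the paper into a two-case analysis: a uniform Lipschitz bound for $s_0<1$, and for $s_0=1$ a separate argument truncating at $|z|=R$ and controlling the tail via the compactness of $C_\varphi$ (through $m_z(1,\varphi)\to0$ when $p\le q$, or $m_z(1,\varphi)\in L^{pq/(p-q)}$ when $q<p$). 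You instead apply Lemma \ref{lem-Fp}(ii) to $f$ itself at the point $\theta_r az+b$, and exploit that $|\theta_r az+b|\le|a||z|+|b|$ \emph{uniformly} in the path parameter, so that $e^{q(|a||z|+|b|)^2/2}e^{-q|z|^2/2}$ decays like a Gaussian as soon as $|a|<1$. This yields a single Lipschitz constant $C^{1/q}$ valid for all $s,t\in[0,1]$, eliminates the endpoint case entirely, and needs no input from Theorems \ref{thm-bd1} or \ref{thm-bd2} beyond the boundedness of each $C_{\varphi_t}$ (guaranteed by Corollaries \ref{cor-co-1} and \ref{cor-co-2} since $|(1-t)a|<1$). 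Your remark that the $a=0$ case is degenerate (constant path) and that the argument reads verbatim for $0<q<1$ with the quasi-norm is also accurate.
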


\begin{proof}
If $a = 0$ then the assertion is trivial. So we assume that $0 < |a| <1$.

For each $s \in [0,1]$, put $\varphi_s(z): = \varphi(sz), \ z \in \C$. Then, by Corollaries \ref{cor-co-1} and \ref{cor-co-2}, composition operators $C_{\varphi_s}$ are all compact from $\calF^p(\C)$ into $\calF^q(\C)$, and $C_{\varphi} = C_{\varphi_1}$ and $C_{\varphi(0)} = C_{\varphi_0}$.
We will show that the map 
$$
[0,1] \to \mathcal C(\mathcal F^p(\C), \mathcal F^q(\C)),\ s \mapsto C_{\varphi_s},
$$
is continuous, that is,
$$
\lim_{s \to s_0}\|C_{\varphi_s} - C_{\varphi_{s_0}}\| = 0, \ \forall s_0 \in [0,1].
$$

\textbf{- Case 1:} $s_0 <1$. In this case fix some $s_1 \in (s_0,1)$. For each $s < s_1$ and each $f \in \calF^p(\C)$ with $\|f\|_p \leq 1$, we have
\begin{align*}
&\|C_{\varphi_s}f - C_{\varphi_{s_0}}f\|_q^q \\
 = & \dfrac{q}{2\pi}\int_{\C} |f(\varphi(sz)) - f(\varphi(s_0z))|^q e^{-\frac{q|z|^2}{2}} dA(z) \\
= & \dfrac{q}{2\pi}\int_{\C}  \left| \int_{s_0}^s z(C_{\varphi}f)'(tz) dt\right|^q e^{-\frac{q|z|^2}{2}} dA(z) \\
\leq & \dfrac{q}{2\pi}\int_{\C} |s-s_0|^q |z|^q \max_{t \in <s_0,s>}|(C_{\varphi}f)'(tz)|^q e^{-\frac{q|z|^2}{2}} dA(z) \\
\leq & e^{2q} \dfrac{q}{2\pi}\int_{\C} |s-s_0|^q|z|^q \max_{t \in <s_0,s>} \left((1+|tz|)e^{\frac{|tz|^2}{2}} \|C_{\varphi}f\|_q\right)^q e^{-\frac{q|z|^2}{2}} dA(z) \\
&\text{( due to Lemma \ref{lem-Fp}[(ii)])}\\
\leq & e^{2q} |s-s_0|^q \|C_{\varphi}f\|_q^q \dfrac{q}{2\pi}\int_{\C} |z|^q  (1+|z|)^q e^{-\frac{q(1-s_1^2)|z|^2}{2}} dA(z) \\
\leq & M^q \|C_{\varphi}\|^q \|f\|_p^q |s-s_0|^q\\ 
\leq & M^q \|C_{\varphi}\|^q |s-s_0|^q,
\end{align*}
where $<s_0,s>$ is the closed interval connecting $s_0$ and $s$ and 
$$
M^q:= e^{2q} \dfrac{q}{2\pi}\int_{\C} |z|^q  (1+|z|)^q e^{-\frac{q(1-s_1^2)|z|^2}{2}} dA(z) < \infty.
$$
From this, it follows that, for every $s < s_1$,
$$
\|C_{\varphi_s} - C_{\varphi_{s_0}}\| = \sup_{\|f\|_p \leq 1} \|C_{\varphi_s}f - C_{\varphi_{s_0}}f\|_q \leq M \|C_{\varphi}\| |s-s_0|,
$$
and the desired limit follows.

\medskip

\textbf{- Case 2:} $s_0 = 1$. Fix an arbitrary number $R>0$. We have that, for every $f \in \calF^p(\C)$ with $\|f\|_p \leq 1$ and every $s \in [1/2,1]$,
\begin{align*}
&\|C_{\varphi_s}f - C_{\varphi_1}f\|_q^q \\
 = & \dfrac{q}{2\pi} \left( \int_{|z|\leq R} + \int_{|z|> R} \right) |f(\varphi(sz)) - f(\varphi(z))|^q e^{-\frac{q|z|^2}{2}} dA(z) \\
= & \mathcal I(R,s) + \mathcal J(R,s).
\end{align*}

\textbf{* Estimate $\mathcal I(R, s)$:} Arguing as above in Case 1, we get
\begin{align*}
\mathcal I(R,s) = & \dfrac{q}{2\pi}\int_{|z|\leq R}  \left| \int_{s}^1 z(C_{\varphi}f)'(tz) dt\right|^q e^{-\frac{q|z|^2}{2}} dA(z) \\
\leq & \dfrac{q}{2\pi}\int_{|z| \leq R} |1-s|^q |z|^q \max_{t \in [s,1]}|(C_{\varphi}f)'(tz)|^q e^{-\frac{q|z|^2}{2}} dA(z) \\
\leq & e^{2q} |1-s|^q \dfrac{q}{2\pi}\int_{|z| \leq R} |z|^q \max_{t \in [s,1]} \left((1+|tz|)e^{\frac{|tz|^2}{2}} \|C_{\varphi}f\|_q\right)^q e^{-\frac{q|z|^2}{2}} dA(z) \\
\leq & e^{2q} |1-s|^q \|C_{\varphi}f\|_q^q \dfrac{q}{2\pi}\int_{|z| \leq R} |z|^q  (1+|z|)^q dA(z) \\
\leq &M_R^q \|C_{\varphi}\|^q |1-s|^q,
\end{align*}
where
$$
M_R^q:= e^{2q} \dfrac{q}{2\pi}\int_{|z| \leq R} |z|^q  (1+|z|)^q dA(z) < \infty.
$$

\textbf{* Estimate $\mathcal J(R, s)$:} For every $f \in \calF^p(\C)$ with $\|f\|_p \leq 1$ and every $s \in [1/2,1]$, using the standard inequality $(x+y)^q \leq 2^q (x^q + y^q)$ for arbitrary positive numbers $x, y, q$, we have
\begin{align*}
\mathcal J(R,s) =& \dfrac{q}{2\pi}\int_{|z| > R} |f(\varphi(sz)) - f(\varphi(z))|^q e^{-\frac{q|z|^2}{2}} dA(z) \\
\leq & 2^q\dfrac{q}{2\pi}\int_{|z| > R} \left( |f(\varphi(sz))|^q + |f(\varphi(z))|^q \right)  e^{-\frac{q|z|^2}{2}} dA(z) \\
\leq & 2^q\dfrac{q}{2\pi}\int_{|z| > R} |f(\varphi(sz))|^q   e^{-\frac{q|sz|^2}{2}} dA(z) +  2^q\dfrac{q}{2\pi}\int_{|z| > R} |f(\varphi(z))|^q   e^{-\frac{q|z|^2}{2}} dA(z) \\
\leq & 2^q\dfrac{q}{2\pi s^2}\int_{|z| > sR} |f(\varphi(z))|^q   e^{-\frac{q|z|^2}{2}} dA(z) +  2^q\dfrac{q}{2\pi}\int_{|z| > R} |f(\varphi(z))|^q   e^{-\frac{q|z|^2}{2}} dA(z) \\
\leq & D_q\dfrac{q}{2\pi}\int_{|z| > R/2} |f(\varphi(z))|^q   e^{-\frac{q|z|^2}{2}} dA(z),
\end{align*}
where $D_q: = 5.2^q$.

We consider the following possibilities.

\textbf{-} If $0 < p \leq q < \infty$ then for every function $f \in \calF^p(\C)$ with $\|f\|_p \leq 1$, by Lemma \ref{lem-pq}, 
\begin{align*}
\mathcal J(R,s) \leq & D_q \dfrac{q}{2\pi} \sup_{|z|>R/2} m_{z}(1,\varphi)^q \int_{|z| > R/2} |f(\varphi(z))|^q e^{-\frac{q|\varphi(z)|^2}{2}} dA(z) \\ 
\leq & D_q \dfrac{\|f\|_q^q}{|a|^2} \sup_{|z|>R/2} m_{z}(1,\varphi)^q \\
\leq & D_q \dfrac{q}{p|a|^2} \|f\|_p^q \sup_{|z|>R/2} m_{z}(1,\varphi)^q \\
\leq & D_q \dfrac{q}{p|a|^2} \sup_{|z|>R/2} m_{z}(1,\varphi)^q.
\end{align*}

Consequently, for every $s \in [1/2,1]$,
\begin{align*}
\|C_{\varphi_s} - C_{\varphi_1}\|^q \leq & \sup_{\|f\|_p \leq 1} (\mathcal I(R,s) + \mathcal J(R,s)) \\
\leq & M_R^q \|C_{\varphi}\|^q |1-s|^q + D_q \dfrac{q}{p|a|^2} \sup_{|z|>R/2} m_{z}(1,\varphi)^q.
\end{align*}
Letting $s \to 1^-$, we obtain
$$
\limsup_{s \to 1^-} \|C_{\varphi_s} - C_{\varphi_1}\|^q  \leq D_q \dfrac{q}{p|a|^2} \sup_{|z|>R/2} m_{z}(1,\varphi)^q.
$$
Since $|a|<1$, $\lim_{|z| \to \infty} m_z(1, \varphi) =0$. Then, letting $R \to \infty$, we get
$$
\limsup_{s \to 1^-} \|C_{\varphi_s} - C_{\varphi_1}\|^q  \leq D_q \dfrac{q}{p|a|^2} \lim_{R \to \infty} \sup_{|z|>R/2} m_{z}(1,\varphi)^q = 0.
$$

\textbf{-} If $0 < q < p < \infty$ then arguing as in Theorem \ref{thm-bd2} and using H\"older inequality, we have that, for every $f \in \calF^p(\C)$ with $\|f\|_p \leq 1$, 
\begin{align*}
\mathcal J(R,s) \leq & D_q \dfrac{q}{2\pi}\int_{|z| > R/2} |f(\varphi(z))|^q   e^{-\frac{q|z|^2}{2}} dA(z)\\
 = & D_q \dfrac{q}{2\pi} \int_{|z| > R/2} m_z(1, \varphi)^q |f(\varphi(z))|^q e^{-\frac{q|\varphi(z)|^2}{2}} dA(z) \\
\leq & D_q \dfrac{q}{2\pi} \left( \int_{|z| > R/2}|f(\varphi(z))|^p e^{-\frac{p|\varphi(z)|^2}{2}} dA(z) \right)^{\frac{q}{p}} \left( \int_{|z| > R/2} m^{\frac{pq}{p-q}}_z(1,\varphi) dA(z) \right)^{\frac{p-q}{p}} \\
\leq & D_q \dfrac{q}{2\pi} \left( \dfrac{2\pi}{p |a|^2} \right)^{\frac{q}{p}} \|f\|_p^q \left( \int_{|z| > R/2} m^{\frac{pq}{p-q}}_z(1,\varphi) dA(z) \right)^{\frac{p-q}{p}} \\
\leq & D_q \dfrac{q}{2\pi} \left( \dfrac{2\pi}{p |a|^2} \right)^{\frac{q}{p}} \left( \int_{|z| > R/2} m^{\frac{pq}{p-q}}_z(1,\varphi) dA(z) \right)^{\frac{p-q}{p}}.
\end{align*}

Consequently, for every $s \in [1/2,1]$,
\begin{align*}
\|C_{\varphi_s} - C_{\varphi_1}\|^q \leq & \sup_{\|f\|_p \leq 1} (\mathcal I(R,s) + \mathcal J(R,s)) \\
\leq & M_R^q \|C_{\varphi}\|^q |1-s|^q + D_q \dfrac{q}{2\pi} \left( \dfrac{2\pi}{p |a|^2} \right)^{\frac{q}{p}} \left( \int_{|z| > R/2} m^{\frac{pq}{p-q}}_z(1,\varphi) dA(z) \right)^{\frac{p-q}{p}}.
\end{align*}
Letting $s \to 1^-$ in the last inequality, we obtain
$$
\limsup_{s \to 1^-} \|C_{\varphi_s} - C_{\varphi_1}\|^q  \leq D_q \dfrac{q}{2\pi} \left( \dfrac{2\pi}{p |a|^2} \right)^{\frac{q}{p}} \left( \int_{|z| > R/2} m^{\frac{pq}{p-q}}_z(1,\varphi) dA(z) \right)^{\frac{p-q}{p}}.
$$
Since $C_{\varphi}$ is compact from $\mathcal F^p(\C)$ into $\mathcal F^q(\C)$, by Theorem \ref{thm-bd2}, $m_z(1, \varphi) \in L^{\frac{pq}{p-q}}(\C, dA)$. Then, letting $R \to \infty$, we get
$$
\limsup_{s \to 1^-} \|C_{\varphi_s} - C_{\varphi_1}\|^q  \leq D_q \dfrac{q}{2\pi} \left( \dfrac{2\pi}{p |a|^2} \right)^{\frac{q}{p}} \lim_{R \to \infty} \left( \int_{|z| > R/2} m^{\frac{pq}{p-q}}_z(1,\varphi) dA(z) \right)^{\frac{p-q}{p}} =0.
$$

The proof is completed.
\end{proof}

\begin{thm} \label{thm-path-co}
For every $p, q \in (0, \infty)$, the set of all compact composition operators from $\calF^p(\C)$ into $\mathcal F^q(\C)$ is a path connected component of $\calC \big(\calF^p(\C),\calF^q(\C)\big)$.
\end{thm}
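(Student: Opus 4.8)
The plan is to show that the set $S$ of all compact composition operators from $\calF^p(\C)$ into $\calF^q(\C)$ is path connected and is maximal among the path connected subsets of $\calC\big(\calF^p(\C),\calF^q(\C)\big)$; together these say exactly that $S$ is a path connected component. For path connectedness, first note that by Proposition \ref{prop-path-c} every compact $C_\varphi$ (so $\varphi(z)=az+b$ with $|a|<1$) is joined inside $S$ to the constant-symbol operator $C_{\varphi(0)}$; the connecting path used there runs through symbols $\varphi(sz)$ with linear coefficient $|sa|<1$, hence through compact operators only, by Corollaries \ref{cor-co-1} and \ref{cor-co-2}. It therefore remains only to join any two constant-symbol operators inside $S$. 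For $b\in\C$, the map $t\mapsto C_{tb}$, $t\in[0,1]$ (the composition operator with constant symbol $tb$), runs through rank-one, hence compact, operators from $C_0$ to $C_b$, and it is norm-continuous: for $0\le s,t\le1$ and $\|f\|_p\le1$, the identity $f(tb)-f(sb)=\int_s^t b\,f'(ub)\,du$ and Lemma \ref{lem-Fp}[(ii)] give
$$
\|C_{tb}f-C_{sb}f\|_q=\|1\|_q\,|f(tb)-f(sb)|\le \|1\|_q\,e^2(1+|b|)e^{\frac{|b|^2}{2}}\,|b|\,|t-s| .
$$
Thus every compact $C_\varphi$ is path-joined inside $S$ to $C_0$, and $S$ is path connected.

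The crucial point is a separation estimate: there is a constant $c>0$ such that $\|C_{\varphi_1}-C_{\varphi_2}\|\ge c$ whenever $C_{\varphi_1}$ is compact and $C_{\varphi_2}\in\calC\big(\calF^p(\C),\calF^q(\C)\big)$ is not compact. By Corollaries \ref{cor-co-1} and \ref{cor-co-2}, such a $C_{\varphi_2}$ can occur only if $p\le q$, and then $\varphi_2(z)=a_2 z$ with $|a_2|=1$. Fix a sequence $z_n\to\infty$ and set $w_n:=\varphi_2(z_n)$, so $|w_n|=|z_n|\to\infty$, $\|k_{w_n}\|_p=1$, and $k_{w_n}\to0$ in $\calO(\C)$. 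Since $C_{\varphi_1}$ is compact and continuous on $\calO(\C)$, Lemma \ref{lem-com} yields $\|C_{\varphi_1}k_{w_n}\|_q\to0$, whereas \eqref{z-norm} yields
$$
\|C_{\varphi_2}k_{w_n}\|_q\ge m_{z_n}(1,\varphi_2)=e^{\frac{(|a_2|^2-1)|z_n|^2}{2}}=1 .
$$
Evaluating $\|C_{\varphi_1}-C_{\varphi_2}\|$ on the unit vector $k_{w_n}$ and using the (quasi-)triangle inequality for $\|\cdot\|_q$, we let $n\to\infty$ to conclude $\|C_{\varphi_1}-C_{\varphi_2}\|\ge c$ (in fact $c=1$).

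Maximality then follows by a soft argument. Let $P\supseteq S$ be path connected. If $P\ne S$, then $P$ contains a non-compact $C_{\varphi_2}$, so there is a continuous path $\Gamma\colon[0,1]\to P\subseteq\calC\big(\calF^p(\C),\calF^q(\C)\big)$ with $\Gamma(0)=C_0\in S$ and $\Gamma(1)=C_{\varphi_2}$. Put $A=\{t\in[0,1]:\Gamma(t)\text{ is compact}\}$ and $B=[0,1]\setminus A$; then $0\in A$ and $1\in B$. For any $t_0\in[0,1]$, continuity of $\Gamma$ gives a neighbourhood of $t_0$ on which $\|\Gamma(t)-\Gamma(t_0)\|<c$, and the separation estimate forces $\Gamma(t)$ to be of the same kind as $\Gamma(t_0)$ (both compact or both non-compact); hence that neighbourhood lies entirely in $A$ if $t_0\in A$ and entirely in $B$ if $t_0\in B$. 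Thus $A$ and $B$ are disjoint nonempty subsets, each open in $[0,1]$, covering the connected space $[0,1]$, which is impossible. Therefore $P=S$, and $S$ is a path connected component of $\calC\big(\calF^p(\C),\calF^q(\C)\big)$. The entire difficulty sits in the separation estimate; it — not any closedness property of the compact operators — is what stops a path from crossing the compact/non-compact divide, and it is the only place where the Fock-space structure (the normalized kernels $k_w$, the identity $m_z(1,\varphi)\equiv1$ when $|a|=1$, and the compactness criterion of Lemma \ref{lem-com}) really enters.
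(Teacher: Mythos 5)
Your proposal is correct, and it is in fact more complete than the paper's own proof of this theorem. For the path-connectedness half you follow the same route as the paper: Proposition \ref{prop-path-c} reduces everything to constant symbols, and then constant-symbol operators are joined by a Lipschitz-continuous segment of rank-one operators (the paper joins $C_{\varphi_1(0)}$ to $C_{\varphi_2(0)}$ directly along the line segment; you route everything through $C_0$ — an immaterial difference). Where you genuinely diverge is the maximality half. The paper's proof of Theorem \ref{thm-path-co} literally stops after showing the set of compact composition operators is path connected; the fact that no path can leave this set is only supplied later, by the implication (ii) $\Rightarrow$ (iii) of Theorem \ref{thm-ip-co}, which shows via an explicit pointwise estimate on $C_{\varphi}k_{\varphi(z)}-C_{\phi}k_{\varphi(z)}$ that a non-compact $C_\varphi$ satisfies $\|C_\varphi-C_\phi\|\ge 1$ for every other symbol $\phi$. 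You instead prove the weaker but sufficient separation statement (compact versus non-compact at distance $\ge 1$) by a softer argument: test on the unit vectors $k_{\varphi_2(z_n)}$, use $m_{z_n}(1,\varphi_2)\equiv 1$ from \eqref{z-norm} on the non-compact side, and use Lemma \ref{lem-com} (rather than Lemma \ref{lem-com1}, correctly, so that all $p\in(0,\infty)$ are covered) to kill the compact side; the clopen decomposition of $[0,1]$ then finishes it. This buys a self-contained proof of the ``component'' claim without the exponential computation of Theorem \ref{thm-ip-co}, at the cost of not yielding the stronger isolation statement that the paper ultimately wants. Your handling of the quasi-norm case $0<q<1$ via $\|f+g\|_q^q\le\|f\|_q^q+\|g\|_q^q$ is also the right fix where the naive reverse triangle inequality would fail.
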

\begin{proof}
Let $C_{\varphi_1}$ and $C_{\varphi_2}$ be two compact composition operators from $\calF^p(\C)$ into $\mathcal F^q(\C)$.
By Proposition \ref{prop-path-c}, $C_{\varphi_1}$ and $C_{\varphi_1(0)}$ belong to the same path connected component and so do $C_{\varphi_2}$ and $C_{\varphi_2(0)}$. 
We show that $C_{\varphi_1(0)}$ and $C_{\varphi_2(0)}$ are in the same path connected component.

For each $s \in [0,1]$, put
$$
\beta_s: = (1-s)\varphi_1(0) + s\varphi_2(0),
$$
and
$$
B_1: = \varphi_2(0) - \varphi_1(0), B_2: = |\varphi_1(0)| + |\varphi_2(0)|.
$$
Obviously, $|\beta_s| \leq |B_2|$ for every $s \in [0,1]$. Moreover, $C_{\varphi_1(0)} = C_{\beta_0}$ and $C_{\varphi_2(0)} = C_{\beta_1}$ and the composition operators $C_{\beta_s}$ are compact from $\calF^p(\C)$ into $\mathcal F^q(\C)$ for all $s \in [0,1]$.

We now prove that the map 
$$
[0,1] \to \mathcal C(\mathcal F^p(\C), \mathcal F^q(\C)),\ s \mapsto C_{\beta_s}
$$
is continuous. 
Fix $s_0 \in [0,1]$. For every $s\in [0,1]$ and every $f \in \calF^p(\C)$ with $\|f\|_p \leq 1$, using Lemma \ref{lem-Fp}[(ii)],  we have
\begin{align*}
\|C_{\beta_s} f - C_{\beta_{s_0}} f\|_q^q & =\dfrac{q}{2 \pi} |f(\beta_s) - f(\beta_{s_0})|^q \int_{\C} e^{-\frac{q|z|^2}{2}}dA(z)\\
& = \|1\|_q^q \left| \int_{s_0}^s \left(B_1 f'(\beta_t) \right) dt \right|^q \\
& \leq  \|1\|_q^q |s-s_0|^q \max_{t \in <s_0,s>} \left| B_1 f'(\beta_t) \right|^q \\
& \leq  \|1\|_q^q |s-s_0|^q |B_1|^q \max_{t \in <s_0,s>} |f'(\beta_t)|^q \\
& \leq  \|1\|_q^q |s-s_0|^q |B_1|^q \left( \|f\|_p e^{2} (1+ B_2)  e^{\frac{B_2^2}{2}} \right)^q\\
& \leq  M^q |s-s_0|^q,
\end{align*}
where, as above, $<s_0, s>$ is the closed interval connecting $s_0$ and $s$, and
$$
M: = \|1\|_q e^2|B_1|(1+ B_2) e^{\frac{B_2^2}{2}}.
$$
From this, it follows that, for every $s \in [0,1]$,
$$
\|C_{\beta_s} - C_{\beta_{s_0}} \| = \sup_{\|f\|_p\leq 1} \|C_{\beta_s} f - C_{\beta_{s_0}} f\|_q \leq M |s-s_0|.
$$
It implies that
$$
\lim_{s \to s_0}\|C_{\beta_s} - C_{\beta_{s_0}} \| =0.
$$

Consequently, $C_{\varphi_1}$ and $C_{\varphi_2}$ are in the same path connected component of $C(\mathcal F^p(\C), \mathcal F^q(\C))$, which completes the proof.
\end{proof}

From Theorem \ref{thm-path-co} and Corollary \ref{cor-co-2} we get

\begin{cor}\label{cor-path-co}
If $0 < q < p < \infty$, then the space $C(\mathcal F^p(\C), \mathcal F^q(\C))$ is path connected.
\end{cor}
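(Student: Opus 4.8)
The plan is to derive this directly from the two results that precede it, with essentially no new work. The first observation I would make is that the hypothesis $0<q<p<\infty$ places us exactly in the setting of Corollary \ref{cor-co-2}, which asserts that a composition operator $C_\varphi\colon\calF^p(\C)\to\calF^q(\C)$ is bounded if and only if it is compact (if and only if $\varphi(z)=az+b$ with $|a|<1$). Consequently every element of $\calC\big(\calF^p(\C),\calF^q(\C)\big)$ is a compact operator, i.e. the space $\calC\big(\calF^p(\C),\calF^q(\C)\big)$ coincides, as a set, with the set of all compact composition operators from $\calF^p(\C)$ into $\calF^q(\C)$.

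The second and final step is to invoke Theorem \ref{thm-path-co}, valid for all $p,q\in(0,\infty)$, which says that the set of all compact composition operators from $\calF^p(\C)$ into $\calF^q(\C)$ is a (single) path connected component of $\calC\big(\calF^p(\C),\calF^q(\C)\big)$. Combining this with the identification from the first step, the whole space $\calC\big(\calF^p(\C),\calF^q(\C)\big)$ is a single path connected component, hence path connected. In particular, given any two composition operators $C_{\varphi_1},C_{\varphi_2}$ in the space, the path constructed in the proof of Theorem \ref{thm-path-co} (going from $C_{\varphi_1}$ to $C_{\varphi_1(0)}$ via Proposition \ref{prop-path-c}, then from $C_{\varphi_1(0)}$ to $C_{\varphi_2(0)}$ along the segment $\beta_s=(1-s)\varphi_1(0)+s\varphi_2(0)$, and finally from $C_{\varphi_2(0)}$ to $C_{\varphi_2}$) lies entirely within $\calC\big(\calF^p(\C),\calF^q(\C)\big)$ and connects them continuously in the operator norm topology.

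There is no genuine obstacle here: the statement is a formal consequence of Corollary \ref{cor-co-2} (boundedness $\Leftrightarrow$ compactness in this regime) together with Theorem \ref{thm-path-co}. The only point worth stating explicitly in the write-up is the set-theoretic equality $\calC\big(\calF^p(\C),\calF^q(\C)\big)=\{\text{compact composition operators }\calF^p(\C)\to\calF^q(\C)\}$, after which the conclusion is immediate.
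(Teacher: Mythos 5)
Your proposal is correct and matches the paper's own derivation exactly: the corollary is stated as an immediate consequence of Corollary \ref{cor-co-2} (boundedness is equivalent to compactness when $0<q<p<\infty$) combined with Theorem \ref{thm-path-co} (the compact composition operators form a single path connected component). Nothing further is needed.
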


Next for $0 < p \leq q < \infty$ we give the following result about the characterization of isolated composition operators $C_{\varphi}$ in the space $\mathcal C(\mathcal F^p(\C), \mathcal F^q(\C))$. 
\begin{thm}\label{thm-ip-co}
Let $0 < p \leq q < \infty$ and $C_{\varphi}$ be a bounded composition operator from $\mathcal F^p(\C)$ into $\mathcal F^q(\C)$ induced by $\varphi(z) = az+b$ with $|a| \leq 1$. The following conditions are equivalent:
\begin{itemize}
\item[(i)] $C_{\varphi}$ is isolated in $\mathcal C\big(\mathcal F^p(\C), \mathcal F^q(\C)\big)$;
\item[(ii)] $C_{\varphi}$ is non-compact, that is, $|a| = 1$ and $b=0$;
\item[(iii)] $\|C_{\varphi} - C_{\phi}\| \geq 1$ for all affine functions $\phi(z) = cz + d \neq \varphi(z)$ such that $C_{\phi} \in \mathcal C\big(\mathcal F^p(\C), \mathcal F^q(\C)\big)$.
\end{itemize}
\end{thm}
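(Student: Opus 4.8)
The plan is to prove the cycle of implications $(i)\Rightarrow(ii)\Rightarrow(iii)\Rightarrow(i)$, the whole substance lying in $(ii)\Rightarrow(iii)$.

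The implication $(iii)\Rightarrow(i)$ is immediate. By Proposition \ref{prop-nec}, every bounded composition operator between the two Fock spaces has the form $C_\phi$ with $\phi$ affine, and $C_\phi=C_\varphi$ exactly when $\phi=\varphi$ (apply both sides to $z\mapsto z$). Hence if $\|C_\varphi-C_\phi\|\ge 1$ for every $C_\phi\in\mathcal C\big(\calF^p(\C),\calF^q(\C)\big)$ with $\phi\ne\varphi$, then the open ball of radius $1$ about $C_\varphi$ in $\mathcal C\big(\calF^p(\C),\calF^q(\C)\big)$ reduces to $\{C_\varphi\}$, so $C_\varphi$ is isolated.

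For $(i)\Rightarrow(ii)$ I argue by contraposition. Suppose $C_\varphi$ is compact. By Theorem \ref{thm-path-co}, $C_\varphi$ lies in the path connected component of $\mathcal C\big(\calF^p(\C),\calF^q(\C)\big)$ consisting of all compact composition operators, and this component contains more than one point (for instance the operators induced by $z\mapsto 0$ and by $z\mapsto z/2$, both compact by Corollaries \ref{cor-0} and \ref{cor-co-1}). A connected set with more than one point contains no isolated point (a singleton inside it would be clopen in it), so $C_\varphi$ is not isolated in this component, hence not isolated in $\mathcal C\big(\calF^p(\C),\calF^q(\C)\big)$. Thus if $C_\varphi$ is isolated it is non-compact, which by Corollary \ref{cor-co-1}[(b)] forces $|a|=1$ and $b=0$.

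It remains to prove $(ii)\Rightarrow(iii)$. Assume $\varphi(z)=az$ with $|a|=1$, and let $\phi(z)=cz+d\ne\varphi(z)$ induce a bounded operator $C_\phi$. By Corollary \ref{cor-co-1}[(a)], either $|c|<1$, or $|c|=1$ with $d=0$ and $c\ne a$; in the first case $\re(\bar a c)\le|c|<1$, and in the second $\bar a c$ lies on the unit circle and differs from $1$, so again $\re(\bar a c)<1$. Now test $C_\varphi-C_\phi$ against the unit vectors $k_{\varphi(z_0)}$, $z_0\in\C$. Using $\|k_{\varphi(z_0)}\|_p=1$, Lemma \ref{lem-Fp}[(i)] applied at the point $z_0$, and $|w|=|z_0|$ for $w=\varphi(z_0)=az_0$, a direct computation gives
\[
\|C_\varphi-C_\phi\|\ \ge\ \|(C_\varphi-C_\phi)k_{\varphi(z_0)}\|_q\ \ge\ \big|\,1-e^{(\bar a c-1)|z_0|^2+\bar a d\,\overline{z_0}}\,\big|,\qquad z_0\in\C.
\]
Since $\re(\bar a c-1)<0$, the real part of the exponent tends to $-\infty$ as $|z_0|\to\infty$, so the right-hand side tends to $1$; taking the supremum over $z_0$ yields $\|C_\varphi-C_\phi\|\ge 1$, which is $(iii)$. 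The main obstacle is precisely this last step: one must select the right family of test functions, do the exponential bookkeeping so the lower bound comes out to be exactly $1$ rather than merely positive, and split into the two admissible forms of $\phi$ to guarantee $\re(\bar a c)<1$; the remaining two implications are soft.
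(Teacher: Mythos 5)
Your proposal is correct and follows essentially the same route as the paper: the cycle $(i)\Rightarrow(ii)\Rightarrow(iii)\Rightarrow(i)$, with $(i)\Rightarrow(ii)$ deduced from Theorem \ref{thm-path-co} and Corollary \ref{cor-co-1}, and the key implication $(ii)\Rightarrow(iii)$ obtained by testing $C_\varphi-C_\phi$ against the normalized kernels $k_{\varphi(z_0)}$ via Lemma \ref{lem-Fp}(i) and letting $|z_0|\to\infty$ using $\re(\overline{a}c)<1$. The only (harmless) cosmetic difference is that you keep the complex exponent in $\bigl|1-e^{(\overline{a}c-1)|z_0|^2+\overline{a}d\,\overline{z_0}}\bigr|$ where the paper first passes to moduli.
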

\begin{proof} 

(i) $\Longrightarrow$ (ii). By Theorem \ref{thm-path-co}, if $C_{\varphi}$ is an isolated composition operator in $\mathcal C(\mathcal F^p(\C),\mathcal F^q(\C))$, then $C_{\varphi}$ must be non-compact. And hence, by Corollary \ref{cor-co-1}, $|a|=1$ and $b=0$. 

(ii) $\Longrightarrow$ (iii). Assume that $|a|=1$ and $b=0$. In this case, for every affine function $\phi(z) = c z+ d \neq \varphi(z)$ such that $C_{\phi} \in \mathcal C\big(\mathcal F^p(\C), \mathcal F^q(\C)\big)$, by Lemma \ref{lem-Fp}[(i)], we have that, for all $w, z\in\C$,
\begin{eqnarray*}
\|C_{\varphi}k_{w} - C_{\phi}k_{w}\|_q%
&\ge& \left|C_{\varphi}k_{w}(z)- C_{\phi}k_{w}(z)\right| e^{-\frac{|z|^2}{2}}\\
&=& \left|e^{\overline{w}\varphi(z)-\frac{|w|^2+|z|^2}{2}} - e^{\overline{w}\phi(z)-\frac{|w|^2+|z|^2}{2}}\right|.
\end{eqnarray*}
In particular, with $w=\varphi(z)$, the last inequality gives
\begin{align*}
\|C_{\varphi}k_{\varphi(z)} - C_{\phi}k_{\varphi(z)}\|_q &\ge \left| 1 - \big| e^{\overline{az}(c z + d) - |z|^2}\big|\right| \notag \\
& = \left| 1 - e^{\re(\overline{a}c - 1)|z|^2 + \re(d\overline{az})} \right|, \forall z \in \mathbb C. \notag
\end{align*}
Since $C_{\phi} \in \mathcal C\big(\mathcal F^p(\C), \mathcal F^q(\C)\big)$ and $\varphi(z) \neq \phi(z)$, then $|c| \leq 1$ and $c \neq a$. Hence, $\re(\overline{a}c) <1$. 
From this it follows that
\begin{align*}
\|C_{\varphi} - C_{\phi}\| &\geq \limsup_{|z| \to \infty}\|C_{\varphi}k_{\varphi(z)} - C_{\phi}k_{\varphi(z)}\|_q \\
&\ge \limsup_{|z| \to \infty} \left| 1 - e^{\re(\overline{a}c - 1)|z|^2 + \re(d\overline{az})} \right| =1.
\end{align*}

(iii) $\Longrightarrow$ (i) is obvious.
\end{proof}

From Theorems \ref{thm-path-co} and \ref{thm-ip-co}, we immediately get the following result.

\begin{cor}
Let $0 < p \leq q < \infty$. The connected component and path connected component in $\mathcal C\big(\mathcal F^p(\C), \mathcal F^q(\C)\big)$are the same and they are only the set of all compact composition operators from $\mathcal F^p(\C)$ into $\mathcal F^q(\C)$.
\end{cor}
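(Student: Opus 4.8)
The plan is to read off the statement from Theorem~\ref{thm-path-co} and Theorem~\ref{thm-ip-co} together with one elementary point-set observation. Write $\calK$ for the set of all compact composition operators from $\calF^p(\C)$ into $\calF^q(\C)$ and $\calN$ for the set of the non-compact (but bounded) ones, so that $\calC\big(\calF^p(\C),\calF^q(\C)\big)=\calK\sqcup\calN$; by Corollary~\ref{cor-co-1}, $\calN$ consists exactly of the operators $C_\varphi$ with $\varphi(z)=az$, $|a|=1$. Theorem~\ref{thm-path-co} already tells us that $\calK$ is a path connected component (in particular, a connected set), so the whole issue is to upgrade this to: $\calK$ is a \emph{connected} component, and every remaining (path) connected component is a singleton.

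First I would invoke the equivalence (i)$\Leftrightarrow$(ii) of Theorem~\ref{thm-ip-co}: every $C_\varphi\in\calN$ is an isolated point of $\calC\big(\calF^p(\C),\calF^q(\C)\big)$ — quantitatively, condition (iii) there even gives $\|C_\varphi-C_\phi\|\ge 1$ for every admissible $\phi\ne\varphi$. Hence each singleton $\{C_\varphi\}$ with $C_\varphi\in\calN$ is clopen in $\calC\big(\calF^p(\C),\calF^q(\C)\big)$, so it is its own connected and its own path connected component; moreover $\calN$, being a union of such singletons, is open, and therefore its complement $\calK$ is closed. Now let $S\subseteq\calC\big(\calF^p(\C),\calF^q(\C)\big)$ be connected with $S\cap\calK\ne\emptyset$. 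If $S$ contained some $C_\varphi\in\calN$, then $S\cap\{C_\varphi\}$ would be a nonempty clopen subset of $S$, proper since $S$ also meets $\calK$ — contradicting the connectedness of $S$. Thus $S\subseteq\calK$, which shows that $\calK$ is a connected component; being path connected, its connected component and path connected component coincide.

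Putting the two steps together, the connected components of $\calC\big(\calF^p(\C),\calF^q(\C)\big)$ are precisely $\calK$ and the singletons $\{C_\varphi\}$ with $C_\varphi\in\calN$, and exactly the same list describes the path connected components; so for every operator in $\calC\big(\calF^p(\C),\calF^q(\C)\big)$ its connected component and its path connected component agree, the unique non-singleton one being the set of all compact composition operators. I do not expect any genuine obstacle here: all of the analytic content has already been supplied by Theorem~\ref{thm-path-co} and Theorem~\ref{thm-ip-co}, and the only point requiring (routine) care is the purely topological implication that, once $\calN$ is open and $\calK$ is closed, a connected set meeting $\calK$ cannot also contain one of the isolated points of $\calN$.
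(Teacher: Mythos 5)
Your proposal is correct and follows exactly the route the paper intends: the paper offers no written proof, stating only that the corollary follows immediately from Theorem~\ref{thm-path-co} (the compact composition operators form a path connected component) and Theorem~\ref{thm-ip-co} (the non-compact ones are precisely the isolated points). Your point-set argument — each isolated operator is a clopen singleton, hence $\mathcal K$ is closed and absorbs every connected set meeting it — is just the routine filling-in of that ``immediately,'' and it is carried out correctly.
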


\subsection{The space $\calC_w\big(\calF^p(\C),\calF^q(\C)\big)$.}
In this subsection using the results in Subsection \ref{topo-c} we obtain a complete characterization of the component structure of $\calC_w\big(\calF^p(\C),\calF^q(\C)\big)$.

For $p, q \in (0, \infty)$ and $\varphi(z) = az + b$ with $|a| \leq 1$ we denote by $\calF(\varphi, p, q)$ the set of all nonzero functions $\psi \in \calF^q(\C)$ such that $W_{\psi, \varphi}: \calF^p(\C) \to \calF^q(\C)$ is bounded. Then
$$
\calC_w\big(\calF^p(\C),\calF^q(\C)\big) = \{ W_{\psi, \varphi}: \psi \in \calF(\varphi, p, q), \varphi(z) = az + b, |a| \leq 1 \}.
$$

\begin{lem}\label{lem-wco}
Let $p, q \in (0,\infty)$, $\varphi(z) = az + b$ with $|a| \leq 1$ and $\psi_1, \psi_2 \in \calF(\varphi, p, q)$. Then $W_{\psi_1,\varphi}$ and $W_{\psi_2,\varphi}$ are in the same path connected component of $\calC_w\big(\calF^p(\C),\calF^q(\C)\big)$.
\end{lem}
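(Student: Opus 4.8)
The plan is to join $W_{\psi_1,\varphi}$ and $W_{\psi_2,\varphi}$ by the image, under the map $\psi\mapsto W_{\psi,\varphi}$, of a suitable path lying in the symbol set $\calF(\varphi,p,q)$. The observation that makes this possible is that, for a \emph{fixed} affine symbol $\varphi$, the assignment $\psi\mapsto W_{\psi,\varphi}$ is complex linear, $\calF^q(\C)$ is a vector space, and a finite linear combination of bounded operators is again bounded. Hence any linear combination of $\psi_1$ and $\psi_2$ belongs to $\calF^q(\C)$, the induced operator is the corresponding combination of $W_{\psi_1,\varphi}$ and $W_{\psi_2,\varphi}$ (in particular bounded), and as long as this combination is not identically zero it lies in $\calF(\varphi,p,q)$ and defines an element of $\calC_w\big(\calF^p(\C),\calF^q(\C)\big)$.

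First I would treat the case in which $\psi_1$ and $\psi_2$ are linearly independent over $\C$. Put $\psi_t:=(1-t)\psi_1+t\psi_2$ for $t\in[0,1]$; independence forces $\psi_t\not\equiv 0$, so $\psi_t\in\calF(\varphi,p,q)$. Since
$$
W_{\psi_t,\varphi}-W_{\psi_s,\varphi}=(t-s)\big(W_{\psi_2,\varphi}-W_{\psi_1,\varphi}\big)=(t-s)\,W_{\psi_2-\psi_1,\varphi},
$$
and $W_{\psi_2-\psi_1,\varphi}$ is a bounded operator, the map $t\mapsto W_{\psi_t,\varphi}$ is continuous from $[0,1]$ into $\calC_w\big(\calF^p(\C),\calF^q(\C)\big)$ and joins $W_{\psi_1,\varphi}$ to $W_{\psi_2,\varphi}$.

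The remaining case, $\psi_2=\lambda\psi_1$ with $\lambda\in\C\setminus\{0\}$, is the only mildly delicate point: the straight segment $(1-t)\psi_1+t\psi_2=\big((1-t)+t\lambda\big)\psi_1$ may pass through the zero function (namely when $\lambda$ is a negative real number), so I would replace it by a detour. As $\C\setminus\{0\}$ is path connected, choose a continuous $\gamma\colon[0,1]\to\C\setminus\{0\}$ with $\gamma(0)=1$ and $\gamma(1)=\lambda$; then every $\gamma(t)\psi_1$ lies in $\calF(\varphi,p,q)$, and
$$
t\longmapsto W_{\gamma(t)\psi_1,\varphi}=\gamma(t)\,W_{\psi_1,\varphi}
$$
is a continuous path in $\calC_w\big(\calF^p(\C),\calF^q(\C)\big)$ from $W_{\psi_1,\varphi}$ to $W_{\lambda\psi_1,\varphi}=W_{\psi_2,\varphi}$. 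In either case $W_{\psi_1,\varphi}$ and $W_{\psi_2,\varphi}$ lie in the same path connected component. I do not expect a real obstacle: the essential simplification, compared with the analogous statements on Hardy or Bergman spaces, is that once $\varphi$ is fixed, boundedness of $W_{\psi,\varphi}$ is a condition on $\psi$ alone, so there is no boundedness issue to control along the path — the only thing to watch is that the path stays away from the zero function, which is exactly what the two cases above arrange.
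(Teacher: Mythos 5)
Your proof is correct and follows essentially the same route as the paper: both join the two operators by a path of symbols that is an affine/linear combination of $\psi_1$ and $\psi_2$, observe that boundedness is automatic because $\psi\mapsto W_{\psi,\varphi}$ is linear for fixed $\varphi$, and handle the linearly dependent case by a detour in $\C\setminus\{0\}$ (the paper does this by choosing $\alpha(t)$ avoiding $1/(1-\lambda)$, which is the same as your $\gamma$ avoiding $0$). Continuity in operator norm via $W_{\psi_t,\varphi}-W_{\psi_s,\varphi}=(t-s)W_{\psi_2-\psi_1,\varphi}$ is exactly the paper's estimate.
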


\begin{proof}
We can easily show that there is a complex valued continuous function $\alpha(t)$ on $[0,1]$ such that $\alpha(0)=0, \alpha(1) = 1$ and $u_t: = (1-\alpha(t))\psi_1 + \alpha(t)\psi_2$ are all nonzero functions in $\calF(\varphi, p, q)$ for all $t \in [0,1]$. 

Indeed, if $\psi_2(z) = \lambda \psi_1(z)$ for some $\lambda \in \C\setminus\{0,1\}$ and all $z \in \C$, we can take any continuous function $\alpha(t)$ so that $\alpha(t) \neq 1/(1-\lambda)$ for all $t \in [0,1]$. Otherwise, we put $\alpha(t) =t$. Moreover, for each $t \in [0,1]$,
$W_{u_t, \varphi} =  (1 - \alpha(t)) W_{\psi_1, \varphi} +  \alpha(t) W_{\psi_2, \varphi}$, and hence, $u_t \in \calF(\varphi, p, q)$.

Obviously,
$$
W_{\psi_1, \varphi} = W_{u_0, \varphi}, W_{\psi_2, \varphi} = W_{u_1, \varphi},
$$
and, for every $s,t \in [0,1]$ and $f \in \calF^p(\C)$,
$$
W_{u_s, \varphi}f - W_{u_t, \varphi}f = (u_s - u_t) f\circ \varphi = (\alpha(s) - \alpha(t)) W_{\psi_2-\psi_1,\varphi}f.
$$
From this, it follows that, for every $t \in [0,1]$,
$$
\lim_{s\to t} \|W_{u_s, \varphi} - W_{u_t, \varphi}\| = 0.
$$
This means that the map
$$
[0,1] \to \mathcal C_w(\mathcal F^p(\C), \mathcal F^q(\C)),\ t \mapsto W_{u_t, \varphi},
$$
is continuous. The proof is completed.
\end{proof}

Let
$$
\mathcal S_0: = \{ \varphi(z) = az + b: |a| < 1\} \text{ and } \mathcal S_1: = \{ \varphi(z) = az + b: |a| = 1\},
$$
and
$$
\mathcal C_{w,0}(\calF^p(\C), \calF^q(\C)): = \{W_{\psi, \varphi}: \varphi \in \mathcal S_0, \psi \in \calF(\varphi, p, q) \}.
$$
\begin{thm}\label{thm-path-wc}
Let $p, q \in (0, \infty)$ be given.
\begin{itemize}
\item[(a)] If $ 0 < q < p < \infty$, then the space $\calC_w\big(\calF^p(\C),\calF^q(\C)\big)$ is path connected.
\item[(b)] If $ 0 < p \leq q < \infty$, then the space $\calC_w\big(\calF^p(\C),\calF^q(\C)\big)$ has the following path connected components
$$
\calC_w\big(\calF^p(\C),\calF^q(\C)\big) = \mathcal C_{w,0}(\calF^p(\C), \calF^q(\C)) \bigcup \bigcup_{\varphi \in \mathcal S_1} \{W_{\psi, \varphi}: \psi \in \calF(\varphi, p, q)\}. 
$$
\end{itemize}
\end{thm}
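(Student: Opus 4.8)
The plan is to handle the two regimes $q<p$ and $p\le q$ separately, in each case reducing --- via Lemma \ref{lem-wco}, which lets one slide the weight along a path --- to the already-known facts about composition operators (Theorem \ref{thm-path-co}, Corollary \ref{cor-path-co}), after first deforming the symbol.

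For (a), let $W_{\psi,\varphi}\in\calC_w\big(\calF^p(\C),\calF^q(\C)\big)$ with $\varphi(z)=az+b$. Put $\varphi_s(z):=saz+b$ for $s\in[\tfrac12,1]$ and keep $\psi$ fixed. Maximizing the quadratic $|saz+b|^2-|az+b|^2$ in $|z|$ shows it is bounded above on $\C$ by a constant depending only on $b$, hence $m_z(\psi,\varphi_s)\le e^{|b|^2/2}m_z(\psi,\varphi)$ for all $s\in[0,1]$; by Theorem \ref{thm-bd2} each $W_{\psi,\varphi_s}$ is bounded, and $W_{\psi,\varphi_{1/2}}$ has a symbol with linear part of modulus $<1$. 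Now Corollary \ref{cor-co-2} gives $1\in\calF(\varphi_{1/2},p,q)$, so Lemma \ref{lem-wco} joins $W_{\psi,\varphi_{1/2}}$ to the compact composition operator $C_{\varphi_{1/2}}$, and Corollary \ref{cor-path-co} joins any two such composition operators; chaining these for $W_{\psi_1,\varphi_1}$ and $W_{\psi_2,\varphi_2}$ proves $\calC_w$ is path connected. The only delicate step is the norm-continuity of $s\mapsto W_{\psi,\varphi_s}$ on $[\tfrac12,1]$: I would split the integral defining $\|(W_{\psi,\varphi_s}-W_{\psi,\varphi_{s_0}})f\|_q^q$ at $|z|=R$, bound the part over $|z|\le R$ by Lemma \ref{lem-Fp}(ii) (getting a bound $\lesssim|s-s_0|^q\|f\|_p^q$ with an $R$-dependent constant), and the tail by the substitution $u=saz+b$ together with H\"older's inequality, using $m_z(\psi,\varphi)\in L^{pq/(p-q)}(\C,dA)$ to make the tail small uniformly in $s$ and $\|f\|_p\le1$ as $R\to\infty$. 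This is the main technical obstacle in (a).

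For (b), path connectedness of $\mathcal C_{w,0}$ follows since each $W_{\psi,\varphi}$ with $|a|<1$ is joined by Lemma \ref{lem-wco} to $C_\varphi$ (bounded and compact by Corollary \ref{cor-co-1}) and all compact composition operators lie in one path component by Theorem \ref{thm-path-co}; each set $\{W_{\psi,\varphi}:\psi\in\calF(\varphi,p,q)\}$ with $\varphi\in\mathcal S_1$ is path connected directly by Lemma \ref{lem-wco}. It remains to show no path leaves these sets. The key observation is that if $\varphi(z)=az+b\in\mathcal S_1$ and $W_{\psi,\varphi}\in\calC_w$, then $m(\psi,\varphi)<\infty$ (Proposition \ref{prop-nec}) forces $\psi(z)e^{a\overline b z}$ to be a bounded entire function, so $\psi(z)=\lambda e^{-a\overline b z}$ with $\lambda\ne0$; consequently $m_z(\psi,\varphi)\equiv|\lambda|e^{|b|^2/2}$ is a positive constant and $W_{\psi,\varphi}$ is automatically \emph{non-compact}. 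Applying an operator $W_{\psi,\varphi}$ to the test functions $1$ and $z$ and using Lemma \ref{lem-Fp}(i) shows that along any path $\gamma(t)=W_{\psi(t),\varphi(t)}$, $\varphi(t)(z)=\alpha(t)z+\beta(t)$, the coefficients $\alpha(t),\beta(t)$ depend continuously on $t$; in particular $\mathcal C_{w,0}=\{W_{\psi,\varphi}:|a|<1\}$ is open in $\calC_w$. It is also closed: if $W_{\psi_n,\varphi_n}\to W_{\psi,\varphi}\in\calC_w$ with $|a_n|<1$ but $|a|=1$, then $a\ne a_n$, and feeding $k_{\varphi(z)}$ into inequality \eqref{ine} (from the proof of Theorem \ref{thm-comdif-wco}) along a sequence $|z_k|\to\infty$ with $m_{z_k}(\psi,\varphi)\to\limsup_{|z|\to\infty}m_z(\psi,\varphi)$ yields, since $m_{z_k}(\psi_n,\varphi_n)\le m(\psi_n,\varphi_n)<\infty$, the bound $\|W_{\psi,\varphi}-W_{\psi_n,\varphi_n}\|\ge\limsup_{|z|\to\infty}m_z(\psi,\varphi)>0$ for every $n$ --- a contradiction. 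So $\mathcal C_{w,0}$ is open and closed, hence a path component, and every path starting in the complementary clopen set $\mathcal C_{w,1}:=\calC_w\setminus\mathcal C_{w,0}$ stays there, where $|\alpha(t)|\equiv1$.

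Finally, inside $\mathcal C_{w,1}$ I would show the symbol is locally constant along any path. For the linear part: if $\alpha(t)\ne\alpha(t_0)$ then, by the argument just used (and because every operator in $\mathcal C_{w,1}$ is non-compact), $\|\gamma(t)-\gamma(t_0)\|\ge\limsup_{|z|\to\infty}m_z(\psi(t_0),\varphi(t_0))>0$, so continuity of $\gamma$ forces $\alpha$ locally constant, hence $\alpha\equiv a_0$ on $[0,1]$. Then each $\gamma(t)=\lambda(t)\,W_{e^{-a_0\overline{\beta(t)}z},\,a_0z+\beta(t)}$ with $\lambda(t)=\psi(t)(0)\ne0$ and $\lambda,\beta$ continuous. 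Using the explicit weight one computes $\|W_{\psi,\varphi}k_w\|_q=|\lambda|e^{|b|^2/2}$ for every $w$, and that, writing $v_0=\overline{a_0}w$, the function $(\gamma(t)-\gamma(t_0))k_w$ at $z$ near $v_0$ has modulus $\approx|\lambda(t)e^{2i\im(\overline w\beta(t))}-\lambda(t_0)e^{2i\im(\overline w\beta(t_0))}|$; if $\beta(t)\ne\beta(t_0)$ one may choose $w$ so that the two phases differ by $\pi$, making this $\approx|\lambda(t)+\lambda(t_0)|$, and a Cauchy estimate on a small disc about $v_0$ (whose radius depends only on $|\lambda(t_0)|,|\beta(t_0)|$) then gives $\|\gamma(t)-\gamma(t_0)\|\ge\|(\gamma(t)-\gamma(t_0))k_w\|_q\ge c(t_0)>0$ uniformly for $t$ near $t_0$; continuity of $\gamma$ forces $\beta$ locally constant, hence constant. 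Thus every path in $\mathcal C_{w,1}$ remains in a single fibre $\{W_{\psi,\varphi_0}:\psi\in\calF(\varphi_0,p,q)\}$, which is therefore exactly a path component. The delicate point here is this last, constant-part rigidity: the compact-difference inequality \eqref{ine} alone only yields $|\beta(t)-\beta(t_0)|^2\lesssim\|\gamma(t)-\gamma(t_0)\|$, which is not enough, and one genuinely needs the sharper estimate obtained by pushing $|w|\to\infty$.
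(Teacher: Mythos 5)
Your argument is essentially correct, but it diverges from the paper's proof in two instructive ways. In part (a) you deform the symbol to $\varphi_{1/2}$ and then face, as you say, the ``main technical obstacle'' of proving norm-continuity of $s\mapsto W_{\psi,\varphi_s}$, which you only sketch. The paper avoids this entirely: when $q<p$, boundedness of $W_{\psi,\varphi}$ forces $m_z(\psi,\varphi)\in L^{pq/(p-q)}(\C,dA)$ by Theorem \ref{thm-bd2}, and if $|a|=1$ then $\psi(z)=\psi(0)e^{-\overline b a z}$ makes $m_z(\psi,\varphi)$ a nonzero constant, which is not $L^{pq/(p-q)}$-integrable; hence $|a|<1$ automatically, $1\in\calF(\varphi,p,q)$ by Corollary \ref{cor-co-2}, and Lemma \ref{lem-wco} plus Corollary \ref{cor-path-co} finish the proof with no deformation of $\varphi$ at all. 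Your sketched tail estimate would go through (the pointwise domination $m_z(\psi,\varphi_s)\le e^{|b|^2/2}m_z(\psi,\varphi)$ you derive is correct), but the whole step is an avoidable detour.

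In part (b) the situation is reversed: the paper only verifies that $\mathcal C_{w,0}$ and each fibre over $\varphi\in\mathcal S_1$ are path connected and then asserts they are components, whereas you actually prove maximality --- clopenness of $\mathcal C_{w,0}$ via continuity of the symbol map and the inequality \eqref{ine}, and local constancy of $\alpha$ and $\beta$ along paths in the complement. This is genuinely more complete than what is written in the paper. Your constant-term rigidity argument is sound and correctly identifies that \eqref{ine} alone is too weak; note, though, that it can be streamlined: for $\varphi(z)=a_0z+\beta$ with $|a_0|=1$ and $\psi(z)=\lambda e^{-\overline\beta a_0 z}$ one computes exactly
$$
\big(W_{\psi,\varphi}k_w\big)(\overline{a_0}w)\,e^{-\frac{|\overline{a_0}w|^2}{2}}=\lambda\, e^{2i\im(\overline w\beta)},
$$
so Lemma \ref{lem-Fp}(i) gives $\|\gamma(t)k_w-\gamma(t_0)k_w\|_q\ge\big|\lambda(t)e^{2i\im(\overline w\beta(t))}-\lambda(t_0)e^{2i\im(\overline w\beta(t_0))}\big|$ with no Cauchy estimate on a disc; choosing $w$ on the line where the two phases differ by $\pi$ (possible exactly when $\beta(t)\ne\beta(t_0)$) yields $\|\gamma(t)-\gamma(t_0)\|\ge|\lambda(t)|+|\lambda(t_0)|\ge|\lambda(t_0)|>0$, and local constancy of $\beta$ follows.
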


\begin{proof}
(a) First we note that for every $W_{\psi,\varphi} \in \calC_w(\calF^p(\C), \calF^q(\C))$,  by Proposition \ref{prop-nec} and Theorem \ref{thm-bd2}, 
$$
m(\psi, \varphi) < \infty \text{ and } m_z(\psi, \varphi) \in L^{\frac{pq}{p-q}}(\C,dA).
$$
It implies that $\varphi(z) = az + b$ with $|a| < 1$. Indeed, if $|a| = 1$ then $\psi(z) = \psi(0) e^{- \overline{b}az}$ and $m_z(\psi, \varphi) = |\psi(0)|e^{\frac{|b|^2}{2}}$ for all $z \in \C$ which is impossible.   

Then, by Corollary \ref{cor-co-2}, $W_{1, \varphi} = C_{\varphi}$ also belongs to $\calC_w(\calF^p(\C), \calF^q(\C))$. Hence, by Lemma \ref{lem-wco}, $W_{\psi, \varphi}$ and $C_{\varphi}$ are in the same path connected component of $\calC_w\big(\calF^p(\C),\calF^q(\C)\big)$. 

From this and Corollary \ref{cor-path-co} it follows that $\calC_w\big(\calF^p(\C),\calF^q(\C)\big)$ is path connected. 

(b) Fix an arbitrary pair of weighted composition operators $W_{\psi_1, \varphi_1}$ and $W_{\psi_2, \varphi_2}$ in $\calC_{w,0}\big(\calF^p(\C),\calF^q(\C)\big)$. Then, by Corollary \ref{cor-co-1}, $W_{1, \varphi_1} = C_{\varphi_1}$ and $W_{1, \varphi_2} = C_{\varphi_2}$ are compact from $\calF^p(\C)$ into $\calF^q(\C)$. And hence, by Lemma \ref{lem-wco}, $W_{\psi_1, \varphi_1}$ and $C_{\varphi_1}$ belong to the same path connected component of $\calC_w\big(\calF^p(\C),\calF^q(\C)\big)$, and so do $W_{\psi_2, \varphi_2}$ and $C_{\varphi_2}$.

From this and Theorem \ref{thm-path-co} it follows that $W_{\psi_1, \varphi_1}$ and $W_{\psi_2, \varphi_2}$ belong to the same path connected component of $\calC_w\big(\calF^p(\C),\calF^q(\C)\big)$. This means that the set $\calC_{w,0}\big(\calF^p(\C),\calF^q(\C)\big)$ is a path connected component in $\calC_w\big(\calF^p(\C),\calF^q(\C)\big)$.

Next, for each $\varphi(z) = a z + b \in \mathcal S_1$, $\psi \in \calF(\varphi,p,q)$ if and only of $\psi(z) = \psi(0) e^{- \overline{b}az}$.
Then
$$
\{W_{\psi, \varphi}: \psi \in \calF(\varphi, p, q)\} = \{W_{c \psi_0, \varphi}: c \in \C \} \text{ with } \psi_0(z) := e^{- \overline{b}az}.
$$
Thus, for each $\varphi \in \mathcal S_1$, the set  $\{W_{\psi, \varphi}: \psi \in \calF(\varphi, p, q)\}$ is a path connected component in $\calC_{w}\big(\calF^p(\C),\calF^q(\C)\big)$.
\end{proof}

Finally, it should be noted that in the space $\mathcal \mathcal C_w(\mathcal F^p(\C), \mathcal F^q(\C))$ there does not exist an isolated weighted composition operator. Indeed, for every weighted composition operator $W_{\psi, \varphi}$,
$$
\lim_{s \to 1} \|W_{s\psi, \varphi} - W_{\psi, \varphi}\| = \lim_{s \to 1}|s-1| \|W_{\psi, \varphi}\| =0.
$$

\bigskip

\end{document}